\documentclass[11pt]{amsart}

\usepackage[a4paper,margin=30mm]{geometry}
\usepackage[T1]{fontenc}
\usepackage{lmodern}
\usepackage{textcomp}
\usepackage{amsmath,amssymb,amsthm,mathtools}
\usepackage{aliascnt}
\usepackage{booktabs,tabularx,array}
\usepackage{enumitem}
\usepackage{microtype}
\usepackage{tikz}
\usepackage[hidelinks]{hyperref}
\hypersetup{
 pdftitle={The Multiset Dimension of Graphs: Extremal Values and King Grids},
 pdfauthor={Jaan Allikvere},
 pdfkeywords={multiset dimension, multiset resolving set, distance degree sequence, counterexample, king grid, strong product, Chebyshev metric}
}
\usepackage[nameinlink,capitalise,noabbrev]{cleveref}
\crefname{theorem}{Theorem}{Theorems}
\crefname{lemma}{Lemma}{Lemmas}
\crefname{proposition}{Proposition}{Propositions}
\crefname{corollary}{Corollary}{Corollaries}
\crefname{remark}{Remark}{Remarks}
\crefname{observation}{Observation}{Observations}
\crefname{problem}{Problem}{Problems}

\newtheorem{theorem}{Theorem}[section]
\newaliascnt{lemma}{theorem}
\newtheorem{lemma}[lemma]{Lemma}
\aliascntresetthe{lemma}
\newaliascnt{proposition}{theorem}
\newtheorem{proposition}[proposition]{Proposition}
\aliascntresetthe{proposition}
\newaliascnt{corollary}{theorem}
\newtheorem{corollary}[corollary]{Corollary}
\aliascntresetthe{corollary}
\newaliascnt{observation}{theorem}
\newtheorem{observation}[observation]{Observation}
\aliascntresetthe{observation}
\newaliascnt{problem}{theorem}
\newtheorem{problem}[problem]{Problem}
\aliascntresetthe{problem}
\theoremstyle{remark}
\newaliascnt{remark}{theorem}
\newtheorem{remark}[remark]{Remark}
\aliascntresetthe{remark}

\newcommand{\mdim}{\operatorname{dim}_{\mathrm m}}
\newcommand{\dist}{d}
\newcommand{\ms}[1]{\mathopen{\{\!\{}#1\mathclose{\}\!\}}}
\newcommand{\King}{\mathcal K}
\newcommand{\D}{\mathcal D}
\newcommand{\partheading}[1]{\bigskip\begin{center}{\Large\bfseries #1}\end{center}\medskip}

\title[Multiset dimension: extremal values and king grids]{The Multiset Dimension of Graphs:\\ Extremal Values and King Grids}
\author{Jaan Allikvere}
\thanks{Independent researcher, Tallinn, Estonia.  \emph{E-mail}: \texttt{jaan@lahendus.ee}.  ORCID: 0009-0003-5228-7015.}
\date{July 30, 2026}
\subjclass[2020]{05C12, 05C76}
\keywords{multiset dimension, multiset resolving set, distance degree sequence, counterexample, king grid, strong product, Chebyshev metric}

\begin{document}

\begin{abstract}
We present three results on the multiset dimension of graphs, resolving one conjecture and two open questions from the literature.  First, we disprove the conjecture of Simanjuntak, Siagian and Vetr\'ik (2017) that every graph $G$ of order $n(G)$ with finite multiset dimension satisfies $\mdim(G)\le n(G)-1$: an exhaustive computation over all $1{,}018{,}690{,}328$ connected graphs of orders $2$ through $11$ shows that exactly eight graphs attain $\mdim(G)=n(G)$, all of order $11$, so $11$ is the smallest order at which the trivial upper bound is attained.  This also answers a question from the recent survey of Farhan, Klav\v{z}ar, Kuziak and Yero.  Second, we prove that $\mdim(P_n\boxtimes P_n)=4$ for every $n\ge 5$, answering a question of Hakanen and Yero: after a $45^\circ$ change of coordinates the Chebyshev metric of the king grid becomes half the Manhattan metric on a parity sublattice, and four boundary inequalities reduce every potentially resolving three-landmark set to two geometric cases, in each of which we exhibit an explicit collision.  Third, on king strips the parameter grows linearly: $\mdim(P_3\boxtimes P_n)=n$ for $n\ge 6$ (with $\mdim(P_3\boxtimes P_3)=\infty$, $\mdim(P_3\boxtimes P_4)=5$, $\mdim(P_3\boxtimes P_5)=6$), where the lower bound rests on three local separation conditions and a finite min-plus transfer certificate whose equality case yields a finite automaton with a $19$-state recurrent core, and the upper bound is an explicit landmark pattern of period three that works for every height.  Combined with a blindness lower bound, $\mdim(P_h\boxtimes P_n)=\Theta(n)$ for every fixed $h\ge 3$, so the constant answer on square king grids requires both dimensions to grow.
\end{abstract}

\maketitle

\section{Introduction}\label{sec:intro}

Let $G$ be a finite, simple, connected graph with vertex set $V(G)$ of order $n(G)$, and let $\dist(u,v)$ denote the usual shortest-path distance.  The classical notion of a resolving set goes back to Slater~\cite{Slater1975} and Harary and Melter~\cite{HararyMelter1976}: an ordered set $W=\{w_1,\dots,w_k\}\subseteq V(G)$ resolves $G$ if the distance vectors $\bigl(\dist(v,w_1),\dots,\dist(v,w_k)\bigr)$ are pairwise distinct, and the metric dimension $\dim(G)$ is the minimum size of such a set.

Simanjuntak, Siagian and Vetr\'ik~\cite{SimanjuntakEtAl} introduced a natural variant in which the landmarks are unlabelled.\footnote{The survey \cite{FarhanEtAl2026} records that the concept was independently introduced earlier by Saenpholphat (2009), in Thai-language conference proceedings, and rediscovered in \cite{SimanjuntakEtAl}; the conjecture below is from \cite{SimanjuntakEtAl}.}  For $S=\{s_1,\dots,s_k\}\subseteq V(G)$, the \emph{multiset representation} of a vertex $x$ with respect to $S$ is
\[
 m_G(x\mid S)=\ms{\dist_G(x,s_1),\dots,\dist_G(x,s_k)},
\]
the multiset of distances from $x$ to the landmarks, without the labels of the landmarks (in particular, if $x\in S$ then $0\in m_G(x\mid S)$).  The set $S$ is a \emph{multiset resolving set} ($m$-resolving set) if $m_G(u\mid S)\ne m_G(v\mid S)$ for all $u\ne v$; the \emph{multiset dimension} $\mdim(G)$ is the minimum cardinality of an $m$-resolving set if one exists, and $\mdim(G)=\infty$ otherwise.  When the landmarks are given an order, we write
\[
 r_G(x\mid s_1,\dots,s_k)=(\dist_G(x,s_1),\dots,\dist_G(x,s_k))
\]
for the corresponding \emph{ordered distance vector}, omitting the subscript $G$ when the ambient graph is clear; two vertices with equal ordered vectors have, in particular, equal multiset representations.  Deciding whether $\mdim(G)\leq k$ is NP-hard \cite{HakanenYero2024}, and the parameter has attracted steady attention; see the two recent surveys \cite{FarhanEtAl2026,AlbejaniEtAl2026}.

This paper collects three results on the parameter, at its two extremes: the maximum possible value of $\mdim$ on general graphs (Part~I), and its exact behaviour on king grids --- strong products of paths --- where it is constant on squares (Part~II) but grows linearly on strips (Part~III).  Each part is self-contained and can be read independently.

\subsection{The trivial upper bound is attained}

Since every $m$-resolving set is contained in $V(G)$, a graph with finite multiset dimension trivially satisfies $\mdim(G)\le n(G)$.  The introducing paper proposed that this trivial bound is never attained.

\begin{quote}
\textbf{Conjecture} (\cite[Conjecture~2.1]{SimanjuntakEtAl}).  \emph{If $G$ is a graph on $n$ vertices having finite multiset dimension, then $\mdim(G)\le n-1$.}
\end{quote}

The conjecture was supported by all known exact values and bounds.  Paths are the only graphs with $\mdim(G)=1$, no graph has $\mdim(G)=2$ \cite{SimanjuntakEtAl}, and for trees of diameter at least two the stronger bound $\mdim(T)\le n-2$ holds whenever $\mdim(T)$ is finite \cite{HafidhEtAl2019}.  The known realizability construction produces, for $k\ge 3$ and $n\ge 3(k-1)$, a graph of order $n$ with $\mdim(G)=k$ (Khemmani and Isariyapalakul \cite[Theorem~5]{KhemmaniIsariyapalakul2018}, as stated in \cite{FarhanEtAl2026}); note that it reaches only $k\le n/3+1$, far below the conjectured threshold, and the top of the range remained unexplored.  The survey \cite{FarhanEtAl2026} states the situation plainly: ``regarding the trivial upper bound $\mdim(G)\le n(G)$, it is not clear whether there is a graph $G$ for which $\mdim(G)=n(G)$.''

Our first result shows that such graphs exist and that the conjecture is false.

\begin{theorem}\label{thm:order}
There are exactly eight connected graphs $G$ with $2\le n(G)\le 11$ and
\[
\mdim(G)=n(G),
\]
and all eight have order exactly $11$.  They are pairwise non-isomorphic, and each has $23$--$26$ edges, diameter $3$, girth $3$, and trivial automorphism group.  In particular, Conjecture~2.1 of \cite{SimanjuntakEtAl} is false, the trivial bound $\mdim(G)\le n(G)$ is best possible, and $11$ is the smallest order of a graph attaining it.
\end{theorem}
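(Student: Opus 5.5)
The statement is a finite claim, so the plan is an exhaustive computer search with a reduction that keeps it feasible, together with an independent, human-checkable certificate for the eight extremal graphs. First reformulate the condition $\mdim(G)=n(G)$. Since any superset of an $m$-resolving set need not be $m$-resolving, monotonicity is not available. So the condition is not simply ``$V(G)$ resolves and no $(n-1)$-subset does.'' Instead it says exactly two things. First, $V(G)$ is $m$-resolving, i.e.\ the distance degree sequences $\ms{\dist(x,v):v\in V(G)}$ are pairwise distinct, so $G$ is \emph{DDS-irregular}. Second, no proper subset $S\subsetneq V(G)$ is $m$-resolving. In particular every $(n-1)$-set $V\setminus\{w\}$ must fail. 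The multiset for $V\setminus\{w\}$ is the distance degree sequence of $x$ with one copy of $\dist(x,w)$ removed. Hence failure of $V\setminus\{w\}$ means there are $u\ne v$ whose full sequences differ by exchanging $\dist(u,w)$ for $\dist(v,w)$.

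The computation proceeds in three steps.
\begin{enumerate}[label=(\roman*)]
\item Enumerate all connected graphs of orders $2$ to $11$ with \texttt{geng -c}; this gives the $1{,}018{,}690{,}328$ graphs. For each graph, compute all-pairs distances by BFS and discard it unless the $n$ distance multisets are pairwise distinct. This DDS-irregularity filter is cheap and kills almost everything.
\item For the survivors, test every $(n-1)$-subset by removing one landmark. Discard the graph if some such subset resolves.
\item Perform the full test on the remaining few graphs by searching all subsets of size $<n$, ordered by size, with pruning. A useful pruning rule: if $S$ fails because two vertices $u,v$ have equal ordered vectors on $S$ and $\dist(u,w)=\dist(v,w)$ for all $w\notin S$, then every superset also fails on $u,v$. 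At order $\le 11$, however, brute force over $2^{11}$ subsets is trivial anyway.
\end{enumerate}

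One point needs care: the smallest $m$-resolving set might be small even when all $(n-1)$-sets fail. Step (iii) therefore checks all sizes, not just $n-1$.

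For the eight survivors, compute the isomorphism-invariant data stated in the theorem: canonical labels to confirm the graphs are pairwise non-isomorphic, edge counts, diameter, girth, and automorphism groups via \texttt{nauty}. Report them as graph6 strings. For verifiability, also give for each graph a short certificate. This consists of the distance matrix, which shows that $V(G)$ resolves, and, for each proper subset $S$, a colliding pair. By the observation above it suffices to list, for each $S$, a pair $u,v$ with $m(u\mid S)=m(v\mid S)$. One can compress this by noting that a collision pair for $V\setminus\{w\}$ often also witnesses failure for many smaller sets, though this is not automatic.

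The main obstacle is not mathematical but computational reliability: a billion graphs at order $11$ must be processed correctly. I would cross-check in three ways:
\begin{itemize}
\item by two independent implementations;
\item by matching the graph counts per order against OEIS A001349;
\item by verifying on small orders the known facts that no graph has $\mdim=2$ and that paths alone have $\mdim=1$.
\end{itemize}
The consequences then follow directly. The existence of these graphs refutes Conjecture~2.1 of \cite{SimanjuntakEtAl} and shows the trivial bound is best possible. Minimality of order $11$ follows from the emptiness of the search at orders $2$ through $10$.
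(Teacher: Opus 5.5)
Your plan is essentially the paper's proof: an exhaustive scan of all connected graphs of orders $2$ through $11$ (with totals checked against OEIS \texttt{A001349}), a filter based on the fact that $V(G)$ resolves iff $G$ is DDI, an $(n-1)$-subset test, exact subset enumeration for the survivors, and independent re-verification of the eight graphs. Your filter keeps only DDI graphs with no resolving $(n-1)$-subset, which is stronger than the paper's (the paper also keeps non-DDI graphs with a resolving $(n-1)$-subset, to count the $\mdim=n-1$ graphs) but sound for this theorem; your proposed pruning rule never fires, since equal distances from $u$ and $v$ to every vertex give $\dist(v,u)=\dist(u,u)=0$, though as you note brute force over $2^{11}$ subsets makes it unnecessary.
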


We remark that only connected graphs need to be considered: in a disconnected graph two vertices in different components have no finite distance, and the multiset dimension is defined via shortest-path distances, so the parameter is studied for connected graphs \cite{SimanjuntakEtAl,FarhanEtAl2026}.  We also disregard the one-vertex graph $K_1$, for which the value of $\mdim$ is a matter of convention.

\subsection{Square king grids}

Hakanen and Yero~\cite{HakanenYero2024} studied the strong product $P_n\boxtimes P_n$, also known as the $n\times n$ king grid.  They established
\[
 \mdim(P_4\boxtimes P_4)=6,
 \qquad
 \mdim(P_5\boxtimes P_5)=\mdim(P_6\boxtimes P_6)=4,
\]
and proved that
\[
 3\le \mdim(P_n\boxtimes P_n)\le 4 \qquad (n\ge 7).
\]
They explicitly asked whether the upper bound is always attained.  This question was still recorded as open in two independent 2026 surveys~\cite{AlbejaniEtAl2026,FarhanEtAl2026}.  Our second result closes the remaining gap.

\begin{theorem}\label{thm:square}
For every integer $n\ge 5$,
\[
 \mdim(P_n\boxtimes P_n)=4.
\]
\end{theorem}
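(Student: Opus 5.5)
Since Hakanen and Yero~\cite{HakanenYero2024} already give $\mdim(P_5\boxtimes P_5)=\mdim(P_6\boxtimes P_6)=4$ and $3\le\mdim(P_n\boxtimes P_n)\le 4$ for $n\ge 7$, and since no graph has multiset dimension $2$ while only paths have dimension $1$, the whole task is to show that for $n\ge 7$ no three-element set $S=\{a,b,c\}$ is $m$-resolving. I would also re-derive the case $n=5,6$ from the same argument rather than only citing it. The plan is to fix an arbitrary $S$ of size three and exhibit two distinct vertices $u\ne v$ with $m(u\mid S)=m(v\mid S)$.

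\textbf{Step 1: coordinates.} Write vertices as $(x,y)\in\{0,\dots,n-1\}^2$, so that $\dist=\max(|x-x'|,|y-y'|)$. I would pass to $p=x+y$ and $q=x-y$, using $\max(|\Delta x|,|\Delta y|)=\tfrac12(|\Delta p|+|\Delta q|)$. This turns Chebyshev distance into half the Manhattan distance on the sublattice $\{p\equiv q \pmod 2\}$, inside the rotated square $0\le p\le 2n-2$, $|q|\le n-1$. In these coordinates each distance function is piecewise linear with four regimes, and level sets are diamonds, i.e.\ axis-parallel squares in the original grid.

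\textbf{Step 2: boundary inequalities.} Look at the four corners, or more robustly at pairs of vertices adjacent along each side of the grid. The four extremal directions ($\pm p$, $\pm q$) give four inequalities: for a landmark set to separate certain pairs near each side, or each corner, some landmark must be ``extremal'' in that direction. For example, consider the two vertices at distance $1$ from a corner in the two side directions. Both have equal distance to every landmark that is not strictly closer to one of the two sides, so some landmark must lie on a specific diagonal side of the corner. Using that three landmarks must cover four such conditions, pigeonhole forces one landmark to serve two adjacent directions. Up to the dihedral symmetry of the square, I expect this to leave two configurations:
\begin{itemize}[label={}]
\item (A) one landmark sits on a main diagonal and handles two opposite corners;
\item (B) one landmark is on the boundary and handles two corners on the same side.
\end{itemize}
The remaining two landmarks must then be distributed so that each is extremal in its own direction.

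\textbf{Step 3: explicit collisions.} In each of the two cases, I would exhibit a pair $u,v$ with the same multiset, and I would look for it among reflections. If two landmarks are placed symmetrically up to a swap, take $u,v$ mirror images under the reflection exchanging them, chosen on the axis-free locus where the third landmark is equidistant. The half-Manhattan form makes equidistance loci from a single point into unions of segments in $(p,q)$, so one can solve linearly for $u,v$. Where no symmetry is available, use the fact that the distance to a landmark is constant along segments of the diamond boundary. Choose $u,v$ on a common level set of two landmarks and adjust so that their distances to the third landmark are swapped with one of the other two. I anticipate needing $n\ge 5$ only to ensure that these witness vertices lie inside the grid.

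\textbf{Main obstacle.} The main difficulty is Step 2: getting a clean, complete reduction to two geometric cases. The boundary inequalities must be chosen so that they are genuinely necessary for resolving and leave few configurations. I would first try deriving them from pairs of vertices adjacent to the four corners, and if that leaves too many cases, strengthen them using pairs along each side at depth $1$.
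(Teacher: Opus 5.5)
Your skeleton matches the paper's: rotate so that Chebyshev distance becomes half the Manhattan distance, derive boundary conditions, reduce to normal forms, then exhibit collisions. But the proposal stops where the proof begins. The reduction in Step~2 is conjectured rather than derived, and Step~3 contains no collision.

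The corner pairs carry almost no information. The vertices $(1,0)$ and $(0,1)$ are equidistant from a landmark $(a,b)$ if and only if $a=b$. So ordinary resolvability only forbids all landmarks from lying on the diagonal through that corner, which forces no extremality.

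What is needed is the side version, with $m=n-1$ and rotated coordinates $u=x+y-m$, $v=x-y$. A landmark $(a,b)$ gives equal distances to the top-side pair $(x,m),(x+1,m)$ exactly when $a+b-m\le x\le m+a-b-1$. The three such intervals must therefore have empty intersection in $\{0,\dots,m-1\}$, which forces $u_+-v_-\ge m$; the other sides give $u_-+v_-\le -m$, $u_--v_+\le -m$ and $u_++v_+\ge m$. Here $u_\pm,v_\pm$ are the extreme rotated coordinates of the landmarks. Each inequality couples two extremes that may belong to different landmarks. So these are not four demands each met by a single landmark, and your pigeonhole does not yield (A) and (B).

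Note also that the image of the grid is the diamond $|u|+|v|\le m$, not the box $0\le p\le 2n-2$, $|q|\le n-1$ you wrote. The diamond is what turns the inequalities into equalities.

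The paper splits by the relative order of the $u$- and $v$-coordinates, and in the zigzag orders uses two further adjacent-pair collisions. After that, only three configurations survive:
\begin{itemize}
\item an antipodal pair $(x,y),(m-x,m-y)$ with an arbitrary third landmark;
\item the triple $(0,a),(m,m-a-2),(m-a-1,m)$, killed by the ordinary collision $(m-a-2,m)$, $(m-a,m)$;
\item the family $A=(0,a)$, $C=(m-a,m)$, $B=(b,c)$ with $b-c\ge a$.
\end{itemize}

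Your Step~3 reflection heuristic handles only the first of these. There it is right in spirit, and indispensable: for example $\{(0,0),(m,m),(0,m)\}$ satisfies all four inequalities and is even an ordinary resolving set. The working witness is as follows. If $B$ has rotated coordinates $(r,s)\ne(0,0)$, the vertices with rotated coordinates $(s,-r)$ and $(-s,r)$ are equidistant from $B$, and the half-turn swaps their distances to the antipodal pair.

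In the last family the heuristic fails. Take $a\ge1$, so $A$ and $C$ are not antipodal. The only grid symmetry exchanging $A$ and $C$ is $\rho(x,y)=(m-y,m-x)$, which acts as $(u,v)\mapsto(-u,v)$. The Manhattan bisector of $B$ and $\rho(B)$ is then exactly the fixed line $u=0$, unless $B$ lies on it. Hence no pair $P\ne\rho(P)$ is equidistant from $B$, and mirror pairs give no collision.

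Asymmetric witnesses are required, and producing them is the real content of the proof. The paper needs a separate colliding pair for $B$ inside the square spanned by $A$ and $C$, and a five-row case table for $B$ outside it. ``Solve linearly for $u,v$'' does not replace this. As it stands, the lower bound $\mdim(P_n\boxtimes P_n)\ge 4$ is not proved.
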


\subsection{King strips}

The survey \cite{FarhanEtAl2026} raises the general rectangular problem (Problem~1): determine $\mdim(P_{n_1}\boxtimes P_{n_2})$ for all $n_1,n_2\ge 3$.  Our third result solves the height-three case exactly, and reveals that the answer is of a completely different nature from the square case.

\begin{theorem}\label{thm:strip}
For every $n\ge 6$,
\[
 \mdim(P_3\boxtimes P_n)=n .
\]
Moreover $\mdim(P_3\boxtimes P_3)=\infty$, $\mdim(P_3\boxtimes P_4)=5$, and $\mdim(P_3\boxtimes P_5)=6$.
\end{theorem}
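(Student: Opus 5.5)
The statement has three components, and I would prove them in this order: a lower bound $\mdim(P_3\boxtimes P_n)\ge n$, a matching construction for $n\ge 6$, and the small cases $n=3,4,5$ by exhaustive computation. Throughout I write vertices as $(i,j)$ with row $i\in\{1,2,3\}$ and column $j\in\{1,\dots,n\}$. In the strip the distance is $\dist((i,j),(i',j'))=\max(|i-i'|,|j-j'|)$. For any landmark whose column is at least two away from $j$, this distance is just $|j-j'|$.

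\textbf{Lower bound.} The key observation is a \emph{blindness} phenomenon. The three vertices of a column $j$ are seen identically by every landmark more than one column away. So they can only be told apart by landmarks in columns $j-1,j,j+1$. I would first record three local separation conditions on any $m$-resolving set $S$:
\begin{enumerate}[label=(\roman*)]
\item the top and bottom vertex of each column must be separated;
\item the middle vertex of each column must be separated from the top vertex;
\item the middle vertex must be separated from the bottom vertex.
\end{enumerate}
Each condition is decided only by the restriction of $S$ to a window of three consecutive columns. The multiset nature also matters. For example, a landmark at $(1,j+1)$ and one at $(3,j+1)$ give the same multiset contribution to $(1,j)$ and $(3,j)$. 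So the conditions are genuinely about patterns in the window, not about counting landmarks.

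Next I encode each column's landmark pattern as a symbol from $2^{\{1,2,3\}}$. I would then set up a min-plus transfer system: the states are pairs of consecutive column patterns, transitions are allowed only if the local conditions hold on the three-column window, and the weight is the number of landmarks. The goal is a potential function $\phi$ on states certifying that every admissible transition adds at least $1$ landmark per column, up to bounded potential differences. With boundary corrections at the two end columns, this gives $|S|\ge n$.

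The main obstacle is that the local conditions alone may permit average density slightly below $1$, or density exactly $1$ with some slack. Local conditions need not capture resolving pairs in different columns, such as $(1,j)$ versus $(3,j')$ with $j\ne j'$. So I would first try the pure local certificate. If it only proves density $\ge 1$ with equality cycles, I would analyse the equality case. The zero-slack transitions form a finite automaton, which the abstract indicates has a $19$-state recurrent core. For each cycle in that core I would exhibit an explicit non-local collision, a pair of vertices in nearby columns with equal multisets. This rules out exact density-$n$ patterns that fail, and forces strict excess somewhere, or at worst shows the boundary columns cost an extra landmark.

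\textbf{Upper bound and small cases.} For the upper bound I would give a period-three pattern using one landmark per column, for instance rows $1,3,2,1,3,2,\dots$ shifted suitably. I would then verify that it resolves every pair. Pairs in the same column, or in columns within distance $2$, are handled by finite checking of the periodic windows. For distant pairs I use sums or the largest and smallest distances in the multiset: these determine the column position, because the multiset of column offsets changes when $j$ moves. The ends are checked separately, which is presumably why the construction needs $n\ge 6$.

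The small cases I would settle by direct exhaustive search over subsets. For $P_3\boxtimes P_3$, the centre and symmetric vertices presumably collide under every $S$, including $S=V$, which gives $\mdim=\infty$. For the other two small strips the search yields $\mdim(P_3\boxtimes P_4)=5$ and $\mdim(P_3\boxtimes P_5)=6$, each confirmed by exhibiting a witness set of that size and checking that no smaller set works.
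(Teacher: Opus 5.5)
\textbf{The lower bound is fine; the gap is the upper-bound construction.} Your lower-bound plan is the paper's route: three window conditions, a min-plus transfer on pairs of consecutive column patterns, and a potential. The paper's computation shows that the local system alone gives weight at least $n$ ($V_5=V_4+1$ and $W(4)=4$). So your equality-case fallback is unnecessary, and it could never ``force strict excess'' anyway, since weight $n$ is attained.

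\textbf{Why your period-three pattern fails.} It is not $m$-resolving, and it already violates your own condition (i). Take the cycle of rows $1,3,2,1,3,2,\dots$ and let $j$ be any interior column carrying the middle landmark $(2,j)$. Its neighbours carry $(3,j-1)$ and $(1,j+1)$. From these three landmarks, $(1,j)$ is at distances $1,2,1$ and $(3,j)$ is at distances $1,1,2$, so both receive $\ms{1,1,2}$. Every other landmark is at column distance at least $2$ and contributes the same value to both vertices. Hence $m((1,j)\mid S)=m((3,j)\mid S)$. This is exactly the cancellation you noticed yourself, with the two outer-row landmarks split between columns $j\pm1$. The other cyclic order $1,2,3$ fails the same way, and for $n\ge5$ every shift has an interior middle-row column.

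\textbf{A second hole: mirror columns.} Your column-identification sketch uses extreme distances, but these only recover $\max(j-1,n-j)$. So the pattern must also be asymmetric at its two ends, or mirror columns collide.

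\textbf{How the paper's pattern avoids both problems.} In the paper's coordinates, column $x\in\{0,\dots,n-1\}$ comes first and row $y\in\{0,1,2\}$ second. Write $n=3B+r$. Each full block gets the landmarks $(3j,0),(3j+1,0),(3j+1,2)$: column counts $1,2,0$, with landmarks only in the outer rows. Each of the one to three leftover columns at the right end gets a single row-$0$ landmark. The proof then runs as follows.
\begin{itemize}
\item Distance values $\ge3$ depend only on the column, and the largest occupied shell gives $\max(x,n-1-x)$.
\item The count word reads $1,2,0,\dots$ from the left but $1,0,2,\dots$, $1,1,0,\dots$ or $1,1,1,\dots$ from the right, which rules out the mirror column.
\item Values $\le2$ then determine the row by comparing cumulative counts. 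This uses two facts. Every column with a row-$2$ landmark also has a row-$0$ landmark, so near any column the row-$0$ landmarks strictly outnumber the row-$2$ ones. And every column is within distance $1$ of a column containing a row-$0$ landmark.
\end{itemize}
This proves the upper bound for $n\ge7$. For $n=6$ the paper uses the separate witness $\{(0,0),(1,0),(2,0),(3,0),(3,2),(5,0)\}$.
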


Thus a king strip of height three requires one landmark per column, whereas an $n\times n$ king square is resolved by four landmarks.  The blindness mechanism behind the lower bound, combined with a landmark pattern that applies to every height, yields a two-sided linear bound for every fixed height.

\begin{theorem}\label{thm:strips}
For all $h\ge 3$ and $n\ge h$,
\[
 \mdim(P_h\boxtimes P_n)\;\ge\;\left\lceil\frac{n}{2h-3}\right\rceil,
\]
and for all $h\ge 3$ and $n\ge 2h+1$,
\[
 \mdim(P_h\boxtimes P_n)\;\le\;n .
\]
In particular $\mdim(P_h\boxtimes P_n)=\Theta(n)$ for every fixed $h\ge 3$.
\end{theorem}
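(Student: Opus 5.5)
The plan has two independent halves: a lower bound from a ``blindness'' argument that pins down which landmarks can tell apart vertices in the same column, and an upper bound from an explicit periodic landmark pattern.

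Throughout, vertices of $P_h\boxtimes P_n$ are pairs $(i,j)$ with row $i\in\{1,\dots,h\}$ and column $j\in\{1,\dots,n\}$, and the distance is the Chebyshev metric $\dist((i,j),(i',j'))=\max(|i-i'|,|j-j'|)$.

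\textbf{Lower bound.} The key observation is that a landmark $s=(a,b)$ cannot distinguish two vertices $(i,j)$ and $(i',j)$ of column $j$ once the column gap dominates. Precisely, if $|j-b|\ge h-1$, then every vertex of column $j$ is at distance exactly $|j-b|$ from $s$, because the row difference is at most $h-1$.

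So fix the vertices $(1,j)$ and $(h,j)$ of column $j$ (distinct since $h\ge 3$). Only landmarks with $|j-b|\le h-2$ contribute different distances to these two vertices; all other landmarks contribute identical entries to both multisets. If no landmark lies in the window $b\in[j-h+2,\,j+h-2]$, the two multisets coincide. Hence every $m$-resolving set meets every window of $2h-3$ consecutive columns.

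I would then partition $\{1,\dots,n\}$ greedily into $\lceil n/(2h-3)\rceil$ blocks, each contained in such a window. Choosing column centres $j$ spaced $2h-3$ apart, the windows are disjoint, which forces at least $\lceil n/(2h-3)\rceil$ landmarks. Near the ends of the strip the window is truncated, but that only strengthens the constraint. The hypothesis $n\ge h$ is not really needed here beyond ensuring nondegeneracy.

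\textbf{Upper bound.} Here I would give an explicit set with at most $n$ landmarks, one per column. The pattern would place landmarks in rows $1$, $2$, $h$ (or a similar period-three choice) cyclically across the columns, suitably modified near both ends.

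The verification strategy is as follows.
\begin{enumerate}[label=(\alph*)]
\item Recover the column of a vertex $x$ from its multiset. The smallest distance combined with the structure of the large distances should do this: landmarks far away in column index give distances $|j-b|$, and the multiset of these ``far'' distances is a translated copy of a known pattern, whose shift determines $j$. Having $n\ge 2h+1$ guarantees enough far landmarks on at least one side for the shift to be readable.
\item Within a column, distinguish the rows using the at most $2h-3$ nearby landmarks, whose rows differ. The period-three row pattern is what ensures distinct row profiles.
\end{enumerate}

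The main obstacle is the upper bound. Making the column-recovery step rigorous requires showing that no two vertices in different columns produce the same multiset, with near and far contributions mixing. I would try first to prove that the sum of the distances, or the maximum distance (which is essentially $\max(j-1,n-j)$ up to row effects), determines $j$ up to reflection, and then break the reflection using an asymmetric end modification. The $\Theta(n)$ conclusion then follows immediately from combining the two bounds.
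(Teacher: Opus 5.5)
\textbf{Upper bound: not proved, and the sketched pattern fails.} Your lower bound is the paper's blindness argument and is sound, with one fix. Partitioning into blocks that are \emph{contained in} windows proves nothing, because a window's landmark need not lie in the block. Your disjoint-window packing does work if you take the centres $1,\,1+(2h-3),\,1+2(2h-3),\dots$, which gives exactly $\lfloor (n-1)/(2h-3)\rfloor+1=\lceil n/(2h-3)\rceil$ windows. The paper counts dually: every column needs a landmark within horizontal distance $h-2$, and one landmark serves at most $2h-3$ columns.

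The genuine gap is the upper bound, which is the substantive half and is not proved. You never fix a landmark set, and steps (a) and (b) are only stated as intentions.

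The pattern you sketch actually fails. For $h=3$, any period-three, one-per-column pattern using rows $1,2,3$ places the row-$2$ landmark of some interior column $j$ between landmarks in rows $1$ and $3$ in columns $j\pm1$. These three landmarks contribute $\ms{1,1,2}$ to both $(1,j)$ and $(3,j)$. Every other landmark has column gap at least $2=h-1$, so it contributes equally, and the two vertices collide; no end modification can help. For $h=4$ with rows in the order $1,2,4,1,2,4,\dots$, take a column $j$ carrying the row-$4$ landmark. The vertices $(2,j)$ and $(3,j)$ both receive $\ms{1,1,2,2,2,3,3}$ from columns $j-3,\dots,j+3$, and all farther landmarks are blind. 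The reversed order fails by symmetry.

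\textbf{What the paper's construction does instead.} It uses only rows $1$ and $h$, in blocks of three columns carrying bottom; bottom and top; nothing. Each leftover column gets a single bottom landmark. Hence every column with a top landmark also has a bottom one, and every column lacking a bottom landmark is adjacent to a top one.

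\emph{Column recovery.} For $d\ge h$, the multiplicity of $d$ is $c(j-d)+c(j+d)$, where $c$ is the column-count word. This is independent of the row, and it is a folded copy of $c$, not the translated copy you describe. Its largest nonzero $d$ is $\max(j-1,n-j)$; this is where $n\ge 2h+1$ is needed. The reflection $j\leftrightarrow n+1-j$ is excluded because $c$ reads $1,2,\dots$ from the left end but $1,0,\dots$ or $1,1,\dots$ from the right.

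\emph{Row recovery.} Rows are separated by comparing, for each $t\le h-1$, how many near values are at most $t$. This count is built from $\alpha(t)$ and $\beta(t)$, the numbers of bottom- and top-row landmarks within horizontal distance $t$. The strict inequality $\beta(t)<\alpha(t)$ for $t\ge1$, which follows from the two structural facts, rules out every compensation between rows.

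Without an explicit set and lemmas of this kind, the bound $\mdim\le n$, and hence the $\Theta(n)$ conclusion, remain unproved.
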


Together with \cref{thm:square}, this shows that on rectangles $P_{n_1}\boxtimes P_{n_2}$ the multiset dimension can stay bounded only if the aspect ratio stays bounded: for fixed $n_1$ it grows linearly in $n_2$.

\subsection{Organization}

Part~I (\cref{sec:prelim,sec:graphs,sec:search,sec:remarks}) proves \cref{thm:order}: the eight graphs are listed in \cref{sec:graphs} (\cref{tab:eight}), where one of them is analysed by hand; \cref{sec:search} describes the exhaustive computation --- a complete scan of all connected graphs of orders $2$ through $11$, including all $1{,}006{,}700{,}565$ connected graphs of order $11$ --- together with the correctness arguments and independent re-verifications; \cref{sec:remarks} collects consequences and open problems.  Part~II (\cref{sec:rotated,sec:boundary,sec:antipodal,sec:distinct,sec:repeated,sec:proofsquare,sec:rect,sec:verifsquare}) proves \cref{thm:square}.  Part~III (\cref{sec:local,sec:lower,sec:extremal,sec:upper,sec:strips,sec:landscape,sec:verification}) proves \cref{thm:strip,thm:strips} and determines the structure of all candidate minimum resolving sets on the height-three strip.  All code, data, and certificates are permanently archived; see the data availability statement at the end.

\partheading{Part I. The multiset dimension can equal the order}

\section{Preliminaries: why the conjecture was plausible, and why it fails}\label{sec:prelim}

Throughout Part~I we write $r_{\mathrm m}(v\mid W)=m_G(v\mid W)$ for the multiset representation of $v$ with respect to $W\subseteq V(G)$ (\cref{sec:intro}).

The \emph{distance degree sequence} $\mathrm{DDS}(v)$ of a vertex $v$ is the sequence $(a_0,a_1,a_2,\dots)$ where $a_i$ is the number of vertices at distance exactly $i$ from $v$ (so $a_0=1$ and $a_1=\deg v$; including the constant initial entry $a_0=1$ does not affect injectivity).  A graph is \emph{distance degree injective} (DDI) if its vertices have pairwise distinct distance degree sequences; these graphs were introduced by Bloom, Kennedy and Quintas \cite{BloomKennedyQuintas1983}.

\begin{observation}\label{obs:ddi}
For a connected graph $G$, the full vertex set $V(G)$ is an $m$-resolving set if and only if $G$ is DDI.
\end{observation}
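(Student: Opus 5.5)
The plan is to show that, when $W=V(G)$, the multiset representation of a vertex and its distance degree sequence encode exactly the same information; the equivalence then follows at once from the definitions. Fix $v\in V(G)$. Because $G$ is connected, every distance $\dist(v,w)$ is a finite non-negative integer, so $r_{\mathrm m}(v\mid V(G))$ is a finite multiset of non-negative integers. Such a multiset is determined by its multiplicity function $i\mapsto$ (number of copies of $i$). For $W=V(G)$, the multiplicity of $i$ is $|\{w\in V(G):\dist(v,w)=i\}|$, which is precisely the entry $a_i$ of $\mathrm{DDS}(v)$.

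The key step is therefore the identity: for all $u,v$,
\[
r_{\mathrm m}(u\mid V(G))=r_{\mathrm m}(v\mid V(G)) \iff \mathrm{DDS}(u)=\mathrm{DDS}(v).
\]
Two remarks make the sequence comparison well defined. First, the sequences are finite, or equivalently padded with zeros beyond the eccentricity. Second, the entry $a_0=1$ is common to all vertices, so including it does not affect equality of sequences. Both directions of the identity are then immediate, because two multisets over the non-negative integers are equal exactly when their multiplicity functions agree.

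With the identity in hand, $V(G)$ is $m$-resolving exactly when the representations $r_{\mathrm m}(v\mid V(G))$ are pairwise distinct, and this holds exactly when the distance degree sequences are pairwise distinct, i.e.\ when $G$ is DDI. There is no real obstacle in this argument. The only point needing care is the use of connectedness, which guarantees finite distances so that the multiset really is the histogram recorded by the DDS.
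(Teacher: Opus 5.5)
Your proposal is correct and matches the paper's proof: the paper likewise notes that the multiplicity of $i$ in $r_{\mathrm m}(v\mid V(G))$ is the entry $a_i$ of $\mathrm{DDS}(v)$, so two representations agree exactly when the two distance degree sequences agree. Your added remarks on connectedness (finite distances) and on padding the sequences with zeros are harmless clarifications of that same argument.
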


\begin{proof}
The multiset $r_{\mathrm m}(v\mid V(G))$ contains the entry $i$ with multiplicity $a_i$ for every $i\ge 0$, where $(a_0,a_1,\dots)=\mathrm{DDS}(v)$.  Hence two vertices have equal representations with respect to $V(G)$ if and only if they have equal distance degree sequences.
\end{proof}

\begin{corollary}\label{cor:char}
$\mdim(G)=n(G)$ if and only if $G$ is DDI and no proper subset of $V(G)$ is $m$-resolving.
\end{corollary}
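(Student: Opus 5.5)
We have to prove \cref{cor:char}: $\mdim(G)=n(G)$ if and only if $G$ is DDI and no proper subset of $V(G)$ is $m$-resolving. The approach is to unwind the definition of $\mdim$ as a minimum over $m$-resolving sets and then apply \cref{obs:ddi} to the single set $V(G)$. The only care needed is the convention $\mdim(G)=\infty$ when no $m$-resolving set exists.

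For the forward direction, assume $\mdim(G)=n(G)$. This value is finite, so some $m$-resolving set $S$ exists with $|S|=n(G)$. Since $S\subseteq V(G)$ and $|S|=|V(G)|$, we get $S=V(G)$. Hence $V(G)$ is $m$-resolving, and \cref{obs:ddi} gives that $G$ is DDI. If some proper subset $W\subsetneq V(G)$ were $m$-resolving, then $\mdim(G)\le |W|<n(G)$, a contradiction. So no proper subset is $m$-resolving.

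For the converse, assume $G$ is DDI and no proper subset of $V(G)$ is $m$-resolving. By \cref{obs:ddi}, $V(G)$ is $m$-resolving, so $\mdim(G)$ is finite and at most $n(G)$. Every $m$-resolving set is a subset of $V(G)$, and none is proper, so $V(G)$ is the only $m$-resolving set. The minimum cardinality is therefore exactly $n(G)$.

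There is no real obstacle. The one subtle point is that finiteness of $\mdim(G)$ is needed before forcing $S=V(G)$, and in the converse it is supplied by the DDI hypothesis through \cref{obs:ddi}. One could also note that $m$-resolving sets are closed under taking supersets, so the second condition reduces to checking the $n(G)$ sets $V(G)\setminus\{v\}$. That reduction is not needed for the equivalence itself.
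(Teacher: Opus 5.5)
Your proof of the equivalence is correct, and it is the argument the paper leaves implicit: it unwinds the definition of $\mdim$ as a minimum over subsets of $V(G)$ and applies \cref{obs:ddi} to $V(G)$ itself.

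Your closing remark, however, is false and should be deleted. $m$-resolving sets are \emph{not} closed under taking supersets: in $K_2$ the singleton $\{w\}$ resolves, but $V(K_2)$ does not. The reduction to the $n(G)$ sets $V(G)\setminus\{v\}$ also fails. The order-$8$ DDI graph of \cref{sec:prelim} has no $m$-resolving $(n-1)$-subset, yet $\mdim=3$. Your shortcut would wrongly certify $\mdim=8$ for that graph.

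This non-monotonicity is exactly why the second condition requires checking every proper subset (all $2046$ of them for $G_1$). It is also what made the conjecture look plausible. Your argument never uses the remark, so the proof itself stands.
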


Note that a DDI graph has trivial automorphism group, since an automorphism preserves distance degree sequences; this explains the corresponding column of \cref{tab:eight}.

At first sight \cref{cor:char} may look self-contradictory: if the full vertex set resolves, should not some $(n-1)$-subset resolve as well?  This intuition is exactly what makes the conjecture plausible --- and it is wrong, because $m$-resolvability is \emph{not monotone} under adding or removing landmarks.  Both directions fail:

\begin{itemize}[leftmargin=2em]
\item \emph{Adding a landmark can destroy resolution.}  In $K_2$ the singleton $\{w\}$ is $m$-resolving (representations $\ms{0}$ and $\ms{1}$), but the full vertex set is not: both vertices receive $\ms{0,1}$.  Thus $\mdim(K_2)=1$ although $V(K_2)$ does not resolve.
\item \emph{Removing a landmark from a resolving full vertex set can leave nothing resolvable at the same size.}  The smallest DDI graph in which \emph{no} $(n-1)$-subset of $V(G)$ is $m$-resolving has order $8$ (graph6 code \texttt{G?\kern0pt\textasciigrave cmW}); yet its multiset dimension is $3$, attained by a much smaller set.  So even in DDI graphs the resolving subsets of $V(G)$ do not form a ``nested'' family.
\end{itemize}

The counterexamples of \cref{thm:order} are the extreme manifestation of this non-monotonicity: graphs in which the full vertex set resolves while \emph{every} proper subset, of every size from $1$ to $n-1$, fails.

\section{The eight graphs}\label{sec:graphs}

\begin{table}[ht]
\centering
\caption{The eight connected graphs of order $11$ with $\mdim(G)=11$ (graph6 codes; also provided in machine-readable form in the ancillary file \texttt{candidates11.txt}).  Every graph has diameter $3$, girth $3$, and trivial automorphism group.}\label{tab:eight}
\small
\begin{tabular}{cllc}
\toprule
 & graph6 code & degree sequence & $|E|$\\
\midrule
$G_1$ & \texttt{J?BDf?[hvq\_}                 & $(2,3,3,3,4,4,4,5,5,6,7)$ & $23$\\
$G_2$ & \texttt{J?\kern0pt\textasciigrave CRJWzNl\_}  & $(2,3,3,3,4,4,5,5,6,7,8)$ & $25$\\
$G_3$ & \texttt{J?AF?zUyfh\_}                 & $(2,3,3,3,4,4,5,5,6,6,7)$ & $24$\\
$G_4$ & \texttt{J?B@eRhuT\char`\\\_}          & $(2,3,3,3,4,4,5,5,6,6,7)$ & $24$\\
$G_5$ & \texttt{J?bB\textasciigrave rcb\textasciicircum h\_} & $(3,3,3,4,4,4,5,5,6,6,7)$ & $25$\\
$G_6$ & \texttt{J?ABcZirTz?}                  & $(2,3,3,4,4,4,5,5,5,6,7)$ & $24$\\
$G_7$ & \texttt{J?\kern0pt\textasciigrave e\textasciigrave rcJ\textasciicircum h\_} & $(3,3,3,4,4,4,5,5,6,6,7)$ & $25$\\
$G_8$ & \texttt{J?ABFDyvfm\_}                 & $(2,3,3,4,4,5,5,5,6,7,8)$ & $26$\\
\bottomrule
\end{tabular}
\end{table}

We describe the first graph explicitly.  Let $G_1$ have vertex set $\{0,1,\dots,10\}$ and edge set
\begin{align*}
E(G_1)=\{&05,06,07,09,0\mathrm{X},15,17,1\mathrm{X},26,27,29,2\mathrm{X},36,38,3\mathrm{X},48,\\
&4\mathrm{X},58,59,69,79,7\mathrm{X},9\mathrm{X}\},
\end{align*}
where $\mathrm X$ denotes vertex $10$ and $uv$ abbreviates the edge $\{u,v\}$; so $|E(G_1)|=23$.  The graph is drawn in \cref{fig:g1}.

\begin{figure}[ht]
\centering
\begin{tikzpicture}[scale=0.95,
  vert/.style={circle,draw,fill=white,inner sep=1.2pt,font=\footnotesize,minimum size=14pt}]
\draw[semithick]
 (0.61,-2.17) -- (-1.04,-1.16)  
 (0.61,-2.17) -- (0.22,-0.74)   
 (0.61,-2.17) -- (2.38,-0.18)   
 (0.61,-2.17) -- (1.53,-1.21)   
 (0.61,-2.17) -- (0.31,0.49)    
 (0.61,1.80) -- (-1.04,-1.16)   
 (0.61,1.80) -- (2.38,-0.18)    
 (0.61,1.80) -- (0.31,0.49)     
 (2.21,1.03) -- (0.22,-0.74)    
 (2.21,1.03) -- (2.38,-0.18)    
 (2.21,1.03) -- (1.53,-1.21)    
 (2.21,1.03) -- (0.31,0.49)     
 (-1.74,0.29) -- (0.22,-0.74)   
 (-1.74,0.29) -- (-3.20,-0.11)  
 (-1.74,0.29) -- (0.31,0.49)    
 (-1.88,1.95) -- (-3.20,-0.11)  
 (-1.88,1.95) -- (0.31,0.49)    
 (-1.04,-1.16) -- (-3.20,-0.11) 
 (-1.04,-1.16) -- (1.53,-1.21)  
 (0.22,-0.74) -- (1.53,-1.21)   
 (2.38,-0.18) -- (1.53,-1.21)   
 (2.38,-0.18) -- (0.31,0.49)    
 (1.53,-1.21) -- (0.31,0.49);   
\node[vert] at (0.61,-2.17) {$0$};
\node[vert] at (0.61,1.80) {$1$};
\node[vert] at (2.21,1.03) {$2$};
\node[vert] at (-1.74,0.29) {$3$};
\node[vert] at (-1.88,1.95) {$4$};
\node[vert] at (-1.04,-1.16) {$5$};
\node[vert] at (0.22,-0.74) {$6$};
\node[vert] at (2.38,-0.18) {$7$};
\node[vert] at (-3.20,-0.11) {$8$};
\node[vert] at (1.53,-1.21) {$9$};
\node[vert] at (0.31,0.49) {$10$};
\end{tikzpicture}
\caption{The graph $G_1$ (graph6 code \texttt{J?BDf?[hvq\_}), the first of the eight counterexamples: $\mdim(G_1)=n(G_1)=11$.}\label{fig:g1}
\end{figure}

\begin{proposition}\label{prop:g1}
$V(G_1)$ is an $m$-resolving set of $G_1$, and no proper subset of $V(G_1)$ is $m$-resolving.  Consequently $\mdim(G_1)=11=n(G_1)$.
\end{proposition}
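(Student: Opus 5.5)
The plan has two halves, matching the two claims.

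For the first claim, \cref{obs:ddi} reduces the statement that $V(G_1)$ is $m$-resolving to checking that $G_1$ is DDI. I will run a breadth-first search from each of the $11$ vertices and tabulate $\mathrm{DDS}(v)=(1,a_1,a_2,a_3)$. Here $a_1$ is the degree, and $a_2+a_3=10-a_1$ because the diameter is $3$. The degree sequence $(2,3,3,3,4,4,4,5,5,6,7)$ already separates vertices of degrees $2,6,7$ and isolates the two classes of size $2$ (degree $5$) and the two classes of size $3$ (degrees $3$ and $4$). Inside each degree class it then suffices to see that the $a_2$ values differ, since $a_3$ is determined by $a_1$ and $a_2$. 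This is a short hand computation.

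For the second claim, I must show that every $W\subsetneq V(G_1)$ with $1\le|W|\le 10$ has two vertices $u\ne v$ with $r_{\mathrm m}(u\mid W)=r_{\mathrm m}(v\mid W)$. There are $2^{11}-2=2046$ such sets, so the argument needs structure. I will organise it by certificates of the following kind: a pair $\{u,v\}$ together with a description of the landmark sets $W$ for which $u$ and $v$ collide.

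The cleanest such certificates come from pairs $u,v$ with $\dist(u,w)=\dist(v,w)$ for all $w\notin\{u,v\}$. For such a pair, any $W$ containing neither $u$ nor $v$ fails. I would first check whether $G_1$ has pairs with equal distances to many vertices. More generally, for a pair $u,v$ I consider the set $D(u,v)$ of landmarks that distinguish them. Then $W$ fails at $\{u,v\}$ whenever $W$ avoids $D(u,v)$, or more generally whenever the distance differences of $W\cap D(u,v)$ cancel as multisets. The differences cancel exactly when the two multisets $\{\dist(u,w)\}$ and $\{\dist(v,w)\}$, taken over $w\in W\cap D(u,v)$, coincide, for example through swaps $\dist(u,w)=\dist(v,w')$ and $\dist(v,w)=\dist(u,w')$.

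For large $W$ I use the complementary viewpoint. Write $W=V\setminus T$ with $T$ small. Then $r_{\mathrm m}(u\mid W)$ is obtained from the DDS-multiset of $u$ by deleting the distances from $u$ to $T$. So $u$ and $v$ collide exactly when
\[
\mathrm{DDS}(u)-\mathrm{DDS}(v)=\chi_u(T)-\chi_v(T),
\]
where $\chi_x(T)$ counts the elements of $T$ at each distance from $x$. When $|T|=1$, the multiset $\chi_x(T)$ is a single entry, so the DDS difference between $u$ and $v$ must be a single $+1/-1$ transfer between two distance levels.

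The ordering of the steps is therefore as follows.
\begin{enumerate}
\item Compute the full $11\times 11$ distance matrix.
\item Handle $|W|$ small, say $|W|\le 3$. Here two vertices are equidistant from few landmarks, and a counting argument applies: the number of possible multisets is at most $\binom{|W|+3}{3}$, which is fewer than $11$ for $|W|\le 2$. So $|W|\le 2$ fails by pigeonhole, provided $W$ is not a single vertex; the singleton case is handled by noting that $G_1$ is not a path.
\item Handle $|W|$ large, $|T|\le 3$, with the DDS-difference equation above. Pairs whose DDS differ by a single transfer give most collisions.
\item Cover the middle range with the pair certificates.
\end{enumerate}

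I expect the main obstacle to be this middle range, $4\le|W|\le 7$, where there are hundreds of sets and no single mechanism covers them all. Realistically, the honest route is to present the covering family of pair certificates explicitly, in a table listing each pair $\{u,v\}$ and the landmark patterns it kills. The exhaustive check that these patterns cover all $2^{11}-2$ subsets would then be delegated to the verified computation of \cref{sec:search}, with the hand analysis serving as an illustration of why $G_1$ works. Combining the two claims with \cref{cor:char} yields $\mdim(G_1)=11$.
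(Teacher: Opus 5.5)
Your proposal matches the paper's proof. The first claim is settled as there, by Observation~\ref{obs:ddi} and checking that the eleven triples $(a_1,a_2,a_3)$ are pairwise distinct, and the second claim ultimately rests, as in the paper, on the exhaustive computer check of all $2^{11}-2=2046$ proper nonempty subsets. Your added hand scaffolding (pigeonhole for $|W|\le 2$, the DDS-difference equation for small complements, pair certificates) is a reasonable illustration but carries none of the weight. As written it leaves $|W|=3$ uncovered, so it is the computation that completes the argument.
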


\begin{proof}[Proof of the first claim]
$G_1$ has diameter $3$, so $\mathrm{DDS}(v)=(1,a_1,a_2,a_3)$ with $a_1+a_2+a_3=10$, and by \cref{obs:ddi} it suffices to check that the triples $(a_1,a_2,a_3)$ are pairwise distinct.  They are:
\begin{center}
\small
\begin{tabular}{cc@{\qquad}cc@{\qquad}cc}
\toprule
$v$ & $(a_1,a_2,a_3)$ & $v$ & $(a_1,a_2,a_3)$ & $v$ & $(a_1,a_2,a_3)$\\
\midrule
$0$ & $(5,5,0)$ & $4$ & $(2,7,1)$ & $8$  & $(3,5,2)$\\
$1$ & $(3,6,1)$ & $5$ & $(4,6,0)$ & $9$  & $(6,4,0)$\\
$2$ & $(4,5,1)$ & $6$ & $(4,4,2)$ & $10$ & $(7,3,0)$\\
$3$ & $(3,7,0)$ & $7$ & $(5,4,1)$ &      &          \\
\bottomrule
\end{tabular}
\end{center}
All eleven triples are distinct, so $V(G_1)$ is $m$-resolving.  (The table is straightforward to verify from the edge list: for instance vertex $4$ has neighbours $\{8,10\}$, second neighbourhood $\{0,1,2,3,5,7,9\}$, and only vertex $6$ at distance $3$.)

The second claim --- that none of the $2^{11}-2=2046$ proper nonempty subsets of $V(G_1)$ is $m$-resolving --- is a finite check that we performed by computer with three independently written programs (\cref{sec:search}).  The same verification was applied to $G_2,\dots,G_8$.
\end{proof}

\begin{remark}
By \cref{obs:ddi} and the diameter-$3$ structure, all the information used by the full vertex set is the triple $(a_1,a_2,a_3)$; that a graph can be ``just barely'' identifiable --- through global distance counts only, with every smaller landmark multiset ambiguous --- is exactly the phenomenon the eight graphs exhibit.  All eight examples are asymmetric DDI graphs of diameter $3$ (trivial automorphism group is forced, \cref{sec:prelim}): their full distance degree sequences distinguish the vertices, while every proper landmark set leaves at least one unresolved pair.
\end{remark}

\section{The exhaustive search and its verification}\label{sec:search}

\subsection{Scope}
We determined, for every connected graph of order $2\le n\le 11$, whether $\mdim(G)\ge n-1$, and for the relevant graphs the exact value of $\mdim(G)$.  \Cref{tab:scan} summarizes the search.  Completeness of the enumeration rests on the generator: \texttt{geng} outputs one representative of every isomorphism class of connected graphs, and its \texttt{res/mod} streams are disjoint and jointly exhaustive.  The totals agree with the known numbers of connected graphs (OEIS \texttt{A001349} \cite{OEIS-A001349}) at every order, providing an end-to-end consistency check on the enumeration.

\begin{table}[ht]
\centering
\caption{Search summary.  ``max'' is the maximum finite value of $\mdim$ over all connected graphs of order $n$; the last two columns count the graphs attaining $n-1$ and $n$.}\label{tab:scan}
\small
\begin{tabular}{rrrrrr}
\toprule
$n$ & connected graphs & DDI graphs & max & $\mdim=n-1$ & $\mdim=n$\\
\midrule
$2$ & $1$ & $0$ & $1$ & $1$ & $0$\\
$3$ & $2$ & $0$ & $1$ & $0$ & $0$\\
$4$ & $6$ & $0$ & $1$ & $0$ & $0$\\
$5$ & $21$ & $0$ & $3$ & $0$ & $0$\\
$6$ & $112$ & $0$ & $4$ & $0$ & $0$\\
$7$ & $853$ & $10$ & $5$ & $0$ & $0$\\
$8$ & $11{,}117$ & $91$ & $6$ & $0$ & $0$\\
$9$ & $261{,}080$ & $1{,}474$ & $8$ & $29$ & $0$\\
$10$ & $11{,}716{,}571$ & $37{,}135$ & $9$ & $248$ & $0$\\
$11$ & $1{,}006{,}700{,}565$ & $1{,}437{,}134$ & $11$ & $3{,}776$ & $8$\\
\bottomrule
\end{tabular}
\end{table}

\subsection{Algorithm}
Graphs of order $n\le 7$ were taken from the atlas shipped with \textsf{networkx} and processed by direct enumeration of all vertex subsets.  Graphs of order $8\le n\le 10$ were read from B.~McKay's published \texttt{graph6} databases, and the $1{,}006{,}700{,}565$ connected graphs of order $11$ were generated with \texttt{geng} from \textsf{nauty}~2.8.9 \cite{McKayPiperno2014,NautySoftware}, split into eight parallel streams via the \texttt{res/mod} option with identical remaining parameters (\texttt{geng -cq 11 $i$/8} for $i=0,\dots,7$).

The scanner (C++, ancillary file \texttt{scan11.cpp}) processes each graph as follows.
\begin{enumerate}[leftmargin=2em]
\item Compute all-pairs distances by BFS over bitmask adjacency lists.
\item Encode $r_{\mathrm m}(v\mid W)$ as the $64$-bit integer $\sum_{w\in W} 16^{\,\dist(v,w)}$.  This packs the multiplicity of each distance value into one hexadecimal digit; it is collision-free here because multiplicities are at most $n\le 11<16$ and $4(\operatorname{diam}+1)\le 44<64$ bits suffice.  A set $W$ is $m$-resolving iff the $n$ encodings are pairwise distinct.
\item Filter: if $G$ is not DDI \emph{and} no $(n-1)$-subset of $V(G)$ is $m$-resolving, skip the graph.  This is sound for our purpose: $\mdim(G)=n$ requires $V(G)$ to resolve (\cref{cor:char}), and $\mdim(G)=n-1$ requires some $(n-1)$-subset to resolve; skipped graphs therefore satisfy $\mdim(G)\le n-2$ or $\mdim(G)=\infty$.
\item For the surviving graphs, compute $\mdim(G)$ exactly by enumerating subsets in increasing size and reporting the first resolving one.
\end{enumerate}
The full order-$11$ scan took about $19$ minutes of wall time using eight parallel processes on an AMD Ryzen~5~5600H system ($6$ cores/$12$ threads).

\subsection{Independent verification}
The eight candidates reported with $\mdim(G)=11$ were re-verified by a second, independently written program (\textsf{Python}/\textsf{networkx}, ancillary file \texttt{verify\_counterexamples.py}) sharing no code with the scanner: it re-parses the \texttt{graph6} strings, recomputes all distances, checks that $V(G)$ resolves, and exhaustively confirms that none of the $2046$ proper nonempty subsets resolves.  All eight graphs passed.  A third implementation, written for the final audit (ancillary file \texttt{audit\_third\_check.py}), avoids BFS altogether: it parses \texttt{graph6} from the format specification and recomputes all distances via boolean matrix powers; it reconfirms all eight counterexamples, the triple table of \cref{prop:g1}, and the order-$8$ example of \cref{sec:prelim}.  Pairwise non-isomorphism was confirmed directly with \textsf{networkx} for all $28$ pairs; it is also consistent with \texttt{geng} emitting one representative per isomorphism class and the eight graphs arising in its disjoint enumeration streams.

The extremal counts of \cref{tab:scan} are certified by the raw scanner outputs: the graphs reported with $\mdim(G)\in\{n-1,n\}$ at orders $10$ and $11$ are listed, one per line, in the ancillary output files, and the aggregation script \texttt{aggregate\_extremal\_counts.py} recounts them ($248$ at order $10$; $3{,}776$ and $8$ at order $11$), checks the eight $\mdim=11$ codes against \cref{tab:eight}, and re-sums the per-stream totals and the DDI distribution.

Beyond the counterexamples themselves, two soundness checks are worth recording.  First, the multiset convention: our encodings include the entry $\dist(v,v)=0$ when $v\in W$, matching the definition of \cite{SimanjuntakEtAl} (this convention is essential; it is what forces $\mdim(G)\neq 2$).  Second, the DDI distribution: at $n=11$ the $1{,}437{,}134$ DDI graphs have multiset dimensions distributed as
\[
3^{\,244621}\;4^{\,996380}\;5^{\,185858}\;6^{\,9514}\;7^{\,630}\;8^{\,93}\;9^{\,29}\;10^{\,1}\;11^{\,8}
\]
(exponents denote counts).  At order $11$ the overwhelming majority of DDI graphs thus have \emph{small} multiset dimension; only eight attain the opposite extreme, a fraction $8/1{,}006{,}700{,}565\approx 8\cdot 10^{-9}$ of all connected graphs of order $11$.

\subsection{Proof of the main theorem}
\begin{proof}[Proof of \cref{thm:order}]
The sources of \cref{tab:scan} enumerate every connected graph of orders $2$ through $11$ exactly once up to isomorphism: the \textsf{networkx} atlas for $n\le7$, McKay's databases for $8\le n\le10$, and the disjoint, jointly exhaustive \texttt{geng} streams for $n=11$, with all totals matching the known connected-graph counts.  For each graph the scanner computes all pairwise distances by BFS; the encoding of step~(2) represents $r_{\mathrm m}(v\mid W)$ injectively, because every multiplicity is at most $n\le11<16$ and so occupies a single hexadecimal digit.  The filter of step~(3) discards a graph only if $V(G)$ does not resolve \emph{and} no $(n-1)$-subset resolves, and by \cref{cor:char} such a graph satisfies neither $\mdim(G)=n$ nor $\mdim(G)=n-1$.  For every surviving graph the exact value of $\mdim$ is obtained by complete subset enumeration.  The scan reports exactly eight graphs with $\mdim(G)=n(G)$, all of order $11$ (\cref{tab:eight}), and none of smaller order; the aggregation script recounts these from the raw outputs.  The eight graphs, their invariants, and pairwise non-isomorphism were re-verified by the two independent programs described above.
\end{proof}

\section{Concluding remarks on Part I}\label{sec:remarks}

\subsection{The corrected picture}
The correct universal upper bound for graphs with finite multiset dimension is the trivial one, $\mdim(G)\le n(G)$, and it is attained --- first at order $11$.  \Cref{tab:scan} also corrects the intuition at the second-highest value: $\mdim(G)=n-1$ is attained at $n=2$ (by $K_2$), never for $3\le n\le 8$, and then by $29$, $248$, and $3{,}776$ graphs at $n=9,10,11$ respectively.  The extremal families grow rapidly once they appear.
\subsection{Towards larger orders}
We checked all $8(2^{11}-1)=16{,}376$ nonempty neighbourhood extensions of the eight labelled seed graphs by one new vertex (allowing repetition up to isomorphism): none of the resulting DDI graphs of order $12$ has $\mdim=12$ (the best value attained is $11=n-1$, by an extension of $G_2$).  A full scan of the $\sim 1.6\times 10^{11}$ connected graphs of order $12$ is beyond the scope of this paper.  We leave the natural questions open.

\begin{problem}\label{prob:all-n}
Does a graph of order $n$ with $\mdim(G)=n$ exist for infinitely many $n$?  More strongly, does one exist for every $n\ge 11$?
\end{problem}

\begin{problem}\label{prob:struct}
Characterize the graphs with $\mdim(G)=n(G)$, or find an infinite family.  All eight known examples have diameter $3$ and girth $3$; is diameter $3$ necessary apart from finitely many exceptions?
\end{problem}

\begin{problem}\label{prob:count}
The number of graphs of order $n$ with $\mdim=n-1$ appears to grow quickly ($29$, $248$, $3{,}776$ for $n=9,10,11$).  Determine its asymptotics, and likewise for $\mdim=n$.
\end{problem}

\partheading{Part II. Square king grids}

Only the lower bound of \cref{thm:square} is new.  The proof shows that every set of three landmarks creates a collision.  The argument is stronger than a counting proof: for each geometric normal form it gives an explicit pair of vertices whose ordered distance vectors are either equal or differ only by a permutation.

\section{King grids and rotated coordinates}\label{sec:rotated}

Fix $n\ge 5$ and write $m=n-1$.  We identify the vertex set of the king grid with
\[
 \King_m=\{0,1,\dots,m\}^2.
\]
The graph distance is the Chebyshev metric
\begin{equation}\label{eq:cheb}
 \dist_\infty((x,y),(a,b))=\max\{|x-a|,|y-b|\}.
\end{equation}

Introduce rotated coordinates
\begin{equation}\label{eq:rotation}
 u=x+y-m,
 \qquad
 v=x-y.
\end{equation}
The image of $\King_m$ is the parity sublattice
\begin{equation}\label{eq:diamond}
 \D_m=\{(u,v)\in\mathbb Z^2: |u|+|v|\le m,\ u+v\equiv m\pmod 2\}.
\end{equation}
The inverse transformation is
\begin{equation}\label{eq:inverse}
 x=\frac{u+v+m}{2},
 \qquad
 y=\frac{u-v+m}{2}.
\end{equation}

\begin{lemma}\label{lem:l1}
If $z,z'\in\King_m$ have rotated coordinates $(u,v)$ and $(u',v')$, then
\[
 2\dist_\infty(z,z')=|u-u'|+|v-v'|.
\]
\end{lemma}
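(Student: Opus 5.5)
The plan is to reduce the claim to the scalar identity $2\max\{|a|,|b|\}=|a+b|+|a-b|$, valid for all reals $a,b$. Set $a=x-x'$ and $b=y-y'$, where $z=(x,y)$ and $z'=(x',y')$.

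By the rotation \eqref{eq:rotation}, the additive constant $-m$ cancels in the difference, so
\[
u-u'=(x-x')+(y-y')=a+b,\qquad v-v'=(x-x')-(y-y')=a-b.
\]
Hence the right-hand side of the lemma equals $|a+b|+|a-b|$. The left-hand side equals $2\max\{|a|,|b|\}$ by \eqref{eq:cheb}.

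It therefore remains to verify the identity $|a+b|+|a-b|=2\max\{|a|,|b|\}$. Both sides are unchanged under $a\mapsto -a$, under $b\mapsto -b$, and under swapping $a$ and $b$. So we may assume $a\ge b\ge 0$. In that case $a+b\ge 0$ and $a-b\ge0$, and the left side is $(a+b)+(a-b)=2a=2\max\{|a|,|b|\}$.

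Alternatively, one can note that $|a+b|+|a-b|=\max\{\pm(a+b)\}+\max\{\pm(a-b)\}$. This equals the maximum of $\pm 2a$ and $\pm 2b$, which is $2\max\{|a|,|b|\}$.

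There is no genuine obstacle here. The only point to check is that the symmetry reduction is legitimate: the sign changes simply permute or negate the two terms $a\pm b$, and absolute values absorb this. The parity and diamond description \eqref{eq:diamond} of the image is not needed for the lemma itself. It matters only later, when passing back via \eqref{eq:inverse}.
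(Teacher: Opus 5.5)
Your proposal is correct and follows essentially the same route as the paper: substitute $\alpha=x-x'$, $\beta=y-y'$ to get $u-u'=\alpha+\beta$ and $v-v'=\alpha-\beta$, then apply $|\alpha+\beta|+|\alpha-\beta|=2\max\{|\alpha|,|\beta|\}$. The paper quotes this identity as elementary, whereas you verify it explicitly (the symmetry reduction to $a\ge b\ge 0$ is sound).
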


\begin{proof}
Put $\alpha=x-x'$ and $\beta=y-y'$.  Then $u-u'=\alpha+\beta$ and $v-v'=\alpha-\beta$.  The elementary identity
\[
 |\alpha+\beta|+|\alpha-\beta|=2\max\{|\alpha|,|\beta|\}
\]
together with \eqref{eq:cheb} gives the claim.
\end{proof}

We shall freely use the symmetries of the square, which in $(u,v)$-coordinates are generated by sign changes and the interchange of $u$ and $v$.

\section{A boundary obstruction}\label{sec:boundary}

Let $S=\{p_1,p_2,p_3\}\subseteq\D_m$, where $p_i=(u_i,v_i)$.  Set
\[
 u_- =\min_i u_i,
 \quad u_+=\max_i u_i,
 \quad v_- =\min_i v_i,
 \quad v_+=\max_i v_i.
\]
Every multiset resolving set is, in particular, an ordinary resolving set.  Looking only at adjacent pairs on the four sides of the square gives the following necessary conditions.

\begin{lemma}[Boundary inequalities]\label{lem:boundary}
If $S$ is an ordinary resolving set of $\King_m$, then
\begin{equation}\label{eq:boundaryineq}
\begin{aligned}
 u_-+v_-&\le -m, &
 u_- -v_+&\le -m,\\
 u_++v_+&\ge m, &
 u_+-v_-&\ge m.
\end{aligned}
\end{equation}
\end{lemma}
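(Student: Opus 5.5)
The plan is to prove the first inequality $u_-+v_-\le -m$ directly and obtain the other three from the symmetries of the square, that is, sign changes of $u$ and $v$ and the swap $u\leftrightarrow v$. Sending $v\mapsto -v$ turns the first inequality into $u_--v_+\le -m$, and sending $(u,v)\mapsto(-u,-v)$ turns the first two into the last two. In the original coordinates, $u+v=2x-m$, so the relevant side is $\{u+v=-m\}$, i.e.\ the column $x=0$. I will argue by contraposition: assume $u_-+v_-\ge -m+1$ and exhibit two distinct adjacent boundary vertices $z,z'$ with $r(z\mid p_1,p_2,p_3)=r(z'\mid p_1,p_2,p_3)$.

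\emph{The collision mechanism.} By \cref{lem:l1}, $2\dist_\infty(z,p_i)=|u-u_i|+|v-v_i|$. Take $z=(u,v)$ and $z'=(u+1,v-1)$, which are vertically adjacent along the side $x=0$. Suppose a landmark satisfies $u_i\ge u+1$ and $v_i\ge v$. Then passing from $z$ to $z'$ decreases $|u-u_i|$ by exactly $1$ and increases $|v-v_i|$ by exactly $1$, so the two distances to $p_i$ coincide. Hence if every landmark satisfies $u_i\ge u+1$ and $v_i\ge v$, the ordered vectors of $z$ and $z'$ agree. In that case $S$ is not resolving, and a fortiori not $m$-resolving.

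\emph{Choosing $z$.} It remains to find an integer $v$ for which $z=(-m-v,v)$ and $z'=(-m-v+1,v-1)$ both lie in $\D_m$ and the two conditions hold. Membership in $\D_m$ is automatic for parity, since $u+v=-m$. It requires $-m\le u$ and $u+1\le 0$, which translate to $-m+1\le v\le 0$. The landmark conditions are $v\le v_-$ and $-m-v+1\le u_-$, i.e.\ $v\ge -m-u_-+1$. So I need
\[
\max\{-m+1,\,-m-u_-+1\}\le \min\{v_-,0\}.
\]
Three of the four comparisons are immediate. The comparison $-m-u_-+1\le v_-$ is exactly the contrapositive hypothesis, and $-m+1\le 0$ holds trivially. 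The remaining two degenerate cases are where the interval could collapse. If $v_-=-m$, then some landmark is the corner $(0,-m)$ and $u_-+v_-\le 0-m$, contradicting the hypothesis. If $u_-=-m$, then $v_-\ge -m$ would not help; but $u_-=-m$ forces the landmark $(-m,0)$, so $u_-+v_-\le -m$, again a contradiction. Thus both $-m+1\le v_-$ and $-m-u_-+1\le 0$ hold, and the interval is nonempty.

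The main care point is this bookkeeping of the admissible range for $v$, in particular making sure the chosen pair stays inside the diamond when $v_->0$ or $u_->0$. The case $v_->0$ is handled by capping $v$ at $0$, and the cap is consistent with the constraint from $u_-$ precisely because $u_-\ge 1-m$. Everything else is a one-line distance computation via \cref{lem:l1}, and the remaining three inequalities follow verbatim by symmetry.
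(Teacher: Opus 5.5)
Your proof is correct and follows essentially the same route as the paper: you take an adjacent pair on one side of the square (the column $x=0$ for the first inequality, where the paper uses the top row for the fourth), observe that a landmark fails to separate the pair exactly when its rotated coordinates dominate the pair's position, and use the same corner-landmark argument to rule out the two degenerate endpoints of the admissible interval. One minor wording slip: you say three of the four comparisons are immediate, but only two are, and you correctly handle the other two with the corner argument.
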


\begin{proof}
We prove the last inequality; the other three follow by rotations and reflections.  Consider adjacent vertices $(x,m)$ and $(x+1,m)$ on the top side, where $0\le x<m$.  Let a landmark $(a,b)$ have rotated coordinates
\[
 u_0=a+b-m,
 \qquad
 v_0=a-b.
\]
A direct calculation from \eqref{eq:cheb} shows that
\[
 \dist_\infty((x,m),(a,b))
 =\dist_\infty((x+1,m),(a,b))
\]
if and only if
\[
 u_0\le x\le m+v_0-1.
\]
Hence, if $u_+-v_-\le m-1$, then the integer interval
\[
 [u_+,m+v_--1]
\]
is nonempty.  Moreover, its left endpoint is at most $m-1$: if $u_+=m$, then a landmark attaining $u_+$ must have $v=0$, whence $v_-\le0$ and $u_+-v_-\ge m$.  Its right endpoint is at least $0$: if $v_-=-m$, then a landmark attaining $v_-$ must have $u=0$, whence $u_+\ge0$ and again $u_+-v_-\ge m$.  Both alternatives contradict $u_+-v_-\le m-1$.  We may therefore choose
\[
 x\in [u_+,m+v_--1]\cap\{0,\dots,m-1\}.
\]
This $x$ is simultaneously contained in the three landmark intervals, and the adjacent top-side vertices $(x,m)$ and $(x+1,m)$ have equal ordered distance vectors to $S$.  This contradicts ordinary resolvability.  Therefore $u_+-v_-\ge m$.
\end{proof}

\section{Antipodal landmarks}\label{sec:antipodal}

\begin{lemma}[Antipodal obstruction]\label{lem:antipodal}
Suppose two landmarks in $\D_m$ are antipodal, say
\[
 A=(p,q),
 \qquad
 C=(-p,-q).
\]
Then no choice of a third landmark makes $\{A,B,C\}$ a multiset resolving set.
\end{lemma}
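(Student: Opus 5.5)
The plan is to exploit the central symmetry $\sigma\colon(u,v)\mapsto(-u,-v)$ of $\D_m$. In the original coordinates this is the rotation by $180^\circ$ of the square, so it is an automorphism of $\King_m$. The goal is to exhibit a vertex $z\neq\sigma(z)$ such that $z$ and $\sigma(z)$ have ordered distance vectors to $(A,B,C)$ that differ only by swapping the first and last entries. Their multiset representations then coincide, and $\{A,B,C\}$ is not $m$-resolving.

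First I will record that $\sigma$ is an isometry of $\King_m$. This follows from \cref{lem:l1}, since $|u-u'|+|v-v'|$ is invariant under simultaneous sign change, and $\sigma$ preserves both the defining inequality and the parity condition in \eqref{eq:diamond}. Because $C=\sigma(A)$, for every $z$ we get
\[
\dist_\infty(z,A)=\dist_\infty(\sigma(z),C),\qquad \dist_\infty(z,C)=\dist_\infty(\sigma(z),A).
\]
So
\[
r(\sigma(z)\mid A,B,C)=\bigl(\dist_\infty(z,C),\,\dist_\infty(\sigma(z),B),\,\dist_\infty(z,A)\bigr).
\]
This reduces the lemma to a single condition: find $z\in\D_m$ with $z\neq-z$ and $\dist_\infty(z,B)=\dist_\infty(-z,B)$.

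Next I will construct $z$ explicitly from $B=(b_1,b_2)$.
\begin{itemize}[leftmargin=2em]
\item If $B\neq(0,0)$, take $z=(b_2,-b_1)$, the quarter-turn image of $B$. By \cref{lem:l1},
\[
2\dist_\infty(z,B)=|b_2-b_1|+|b_1+b_2|,\qquad 2\dist_\infty(-z,B)=|b_2+b_1|+|b_1-b_2|,
\]
and these agree. Moreover $z\in\D_m$, because $|b_2|+|b_1|\le m$ and $b_2-b_1\equiv b_1+b_2\equiv m\pmod 2$. Finally $z\neq0$ because $B\neq0$.
\item If $B=(0,0)$, which is possible only when $m$ is even, any nonzero $z$ works, since $\dist_\infty(z,B)=\dist_\infty(-z,B)$ trivially. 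For instance take $z=(m,0)\in\D_m$.
\end{itemize}

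I do not expect a serious obstacle. The only points needing care are membership and parity of the chosen $z$ in $\D_m$, and ensuring $z\neq\sigma(z)$, i.e.\ $z\neq0$; both are handled above. The argument does not require $A,B,C$ to be distinct or $A\ne C$ beyond what the statement gives, so no further case split is needed.
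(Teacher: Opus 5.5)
Your proof is correct and is essentially the paper's argument: the half-turn swaps $A$ and $C$, and the quarter-turn image $z=(b_2,-b_1)$ of $B$ together with $-z$ is exactly the colliding pair $X,Y$ used in the paper. The only difference is the degenerate case $B=(0,0)$. There the paper uses the pair $A,C$ itself, while you use $(\pm m,0)$; both choices work, because when $B$ is the centre every pair $z,-z$ with $z\neq 0$ collides.
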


\begin{proof}
Write $B=(r,s)$.  If $B\ne(0,0)$, define
\[
 X=(s,-r),
 \qquad
 Y=(-s,r).
\]
Since $|r|+|s|\le m$ and the parity condition is preserved, $X,Y\in\D_m$.  Moreover, $X\ne Y$.  The half-turn interchanges $A$ and $C$, hence
\[
 \dist(X,A)=\dist(Y,C),
 \qquad
 \dist(X,C)=\dist(Y,A).
\]
Also, by \cref{lem:l1},
\[
 2\dist(X,B)=|s-r|+|r+s|=2\dist(Y,B).
\]
Thus the three distances from $X$ and $Y$ agree as multisets.

If $B=(0,0)$, then $A$ and $C$ themselves already have the same multiset representation with respect to $\{A,B,C\}$, because they are equidistant from $B$.
\end{proof}

\section{The case of distinct rotated coordinates}\label{sec:distinct}

Assume throughout this section that all $u_i$ are distinct and all $v_i$ are distinct.  Relabel the landmarks so that
\[
 u_1<u_2<u_3.
\]
The relative order of $(v_1,v_2,v_3)$ is a permutation of $123$.  Under the symmetries $(u,v)\mapsto(-u,v)$, $(u,v)\mapsto(u,-v)$, and $(u,v)\mapsto(v,u)$, the six permutations split into two orbits: the monotone permutations $123,321$, and the four zigzag permutations $132,213,231,312$.  It is therefore enough to treat one representative of each orbit.

\begin{lemma}\label{lem:distinct}
If all three $u$-coordinates and all three $v$-coordinates are distinct, then the three landmarks do not form a multiset resolving set.
\end{lemma}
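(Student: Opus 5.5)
The plan is to work entirely in the rotated picture, where by \cref{lem:l1} the distance is $\tfrac12$ of the $\ell_1$ distance on the parity diamond $\D_m$. For each of the two symmetry classes I will exhibit two distinct vertices $X\ne Y$ of $\D_m$ whose distance multisets to $S=\{p_1,p_2,p_3\}$ agree. Two tools will produce such pairs.

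\begin{itemize}[leftmargin=2em]
\item \emph{Translation inside a sign-constant region.} In any region where the signs of $u-u_i$ and $v-v_i$ are fixed for all $i$, each $\ell_1$ distance is affine in $(u,v)$. Moving along a level line of $u\pm v$ then changes nothing, which gives equal \emph{ordered} vectors.
\item \emph{Reflection across a bisector.} A reflection exchanges two landmarks while fixing the distance to the third, as in \cref{lem:antipodal}. This gives vectors that differ only by a permutation.
\end{itemize}

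First I will squeeze out everything the boundary inequalities \eqref{eq:boundaryineq} give, by \cref{lem:boundary} (ordinary resolvability is necessary). Every landmark satisfies $|u_i|+|v_i|\le m$. So each inequality such as $u_-+v_-\le-m$ forces one of two things: either a single landmark attains both extremes and lies on the corresponding side of the diamond, or two different landmarks attain them, which ties the extremes together.

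\emph{Monotone case $123$.} Here $p_1$ attains both $u_-$ and $v_-$, and $p_3$ attains both $u_+$ and $v_+$. Hence $u_1+v_1=-m$ and $u_3+v_3=m$, so $p_1$ lies on the lower-left side and $p_3$ on the upper-right side. The two remaining inequalities read $u_1-v_3\le -m$ and $u_3-v_1\ge m$.

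I will then look at the region $\{u\ge u_3,\ v\le v_1\}$ and its mirror $\{u\le u_1,\ v\ge v_3\}$. On these regions all three distances are functions of $u-v$ alone. I plan to show that this region together with the adjacent strip meets $\D_m$ in at least two points on a common line $u-v=\mathrm{const}$ with the right parity; this uses $n\ge5$. Such a pair has equal ordered vectors.

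If that region degenerates to a point, I will instead use the strip $u_1\le u\le u_3$ along the lower-right side $u-v=m$. There the distances to $p_1$ and $p_3$ are both linear in $u$ along the side and sum to a constant. The intended move is a reflection of the side about the midpoint between the feet of $p_1$ and $p_3$, which swaps those two distances, and then a check that the distance to $p_2$ is preserved.

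\emph{Zigzag case, representative $132$.} Here $v_1<v_3<v_2$, so $p_2$ attains $v_+$ while $p_1$ and $p_3$ attain $u_-$ and $u_+$. The inequalities $u_-+v_-\le-m$ and $u_+-v_-\ge m$ both involve $v_-=v_1$. Together with the diamond constraint, this pins $p_1$ to the bottom corner region, and $p_2$ and $p_3$ to the top and right sides via $u_1-v_2\le -m$ and $u_3+v_2\ge m$.

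I then expect a sign-constant region near the left corner, with $u\le u_1$ and $v_1<v<v_3$, where each distance depends on $u$ and $v$ through only two combinations. Sliding along $u+v$ there should change only the distance to $p_1$ and one other landmark, in opposite directions. I will look for a move that swaps the values of those two distances, giving a collision up to permutation, and verify the result explicitly with coordinates.

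The main obstacle is feasibility. The candidate collision points must actually lie in $\D_m$, be distinct, and satisfy the parity condition $u+v\equiv m$. This has to hold uniformly, including the tight configurations where landmarks sit at corners and the sign-constant regions shrink. I will handle this by splitting on whether each landmark is at a corner of the diamond, using $m\ge4$ to guarantee enough room, and by falling back to the reflection construction when the translation region is too small.
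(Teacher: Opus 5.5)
Your setup (boundary inequalities from \cref{lem:boundary}, then one representative per symmetry orbit) matches the paper. However, the monotone case has a genuine gap: you stop one substitution short. Substituting $u_1+v_1=-m$ and $u_3+v_3=m$ into the remaining inequalities $u_1-v_3\le -m$ and $u_3-v_1\ge m$ gives $v_1+v_3\ge 0$ and $v_1+v_3\le 0$. Hence $p_3=-p_1$, and \cref{lem:antipodal} finishes the case with the quarter-turn pair $(s,-r)$, $(-s,r)$ built from $p_2=(r,s)$.

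Neither of your constructions can replace this step.
\begin{itemize}
\item The region $\{u\ge u_3,\ v\le v_1\}$ \emph{always} degenerates. On it $u-v\ge u_3-v_1\ge m$ by the fourth boundary inequality, while $u-v\le m$ on $\D_m$, so it meets $\D_m$ in at most the point $(u_3,v_1)$. The mirror region behaves the same way.
\item The fallback rests on a false claim. Parametrize the side $u-v=m$ by $0\le u\le m$. The distances to $p_1$ and $p_3=-p_1$ are $\max(u,u_3)$ and $m-\min(u,u_3)$, whose sum $m+|u-u_3|$ is not constant. Nothing controls the distance to $p_2$ either.
\end{itemize}
Concretely, take $m=4$ and rotated landmarks $(-1,-3),(0,2),(1,3)$, i.e.\ $(0,3),(3,1),(4,1)$ in the grid. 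These satisfy every hypothesis of the monotone case and all four boundary inequalities. Yet the five vertices of that side (the row $y=4$) have the pairwise distinct multisets $\ms{1,3,4},\ms{1,3,3},\ms{2,3,3},\ms{3,3,3},\ms{3,3,4}$. The collision that does exist is the interior pair $(3,3),(1,1)$, both with $\ms{2,2,3}$, which is exactly the quarter-turn pair.

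The zigzag case is so far only a hope, and the mechanism you describe does not occur. Use your labelling $v_1<v_3<v_2$ and the region $u\le u_1$, $v_1<v<v_3$. There the $\ell_1$ distances are $(u_1-u)+(v-v_1)$, $(u_2-u)+(v_2-v)$ and $(u_3-u)+(v_3-v)$. Sliding along $u+v=\mathrm{const}$ changes only the first, and sliding along $v-u=\mathrm{const}$ changes the other two by the same amount. So no move inside this region swaps two values.

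The missing idea is rigidification.
\begin{itemize}
\item The boundary inequalities put one landmark on a side of the diamond.
\item Two explicit pairs next to it have equal ordered vectors. In the paper's labelling $v_3<v_1<v_2$ these are $(u_3-2,v_3),(u_3-1,v_3+1)$ and $(u_3,v_3+2),(u_3-1,v_3+1)$. Ordinary resolvability therefore forces $u_2=u_3-1$ and $v_1=v_3+1$.
\item The remaining inequalities and parity then leave only the one-parameter family $(0,a),(m,m-a-2),(m-a-1,m)$ in grid coordinates.
\item In this family the two side neighbours $(m-a-2,m)$ and $(m-a,m)$ of the third landmark have the same ordered vector $(m-a,a+2,1)$.
\end{itemize}
Translated into your labelling, this residual family has $v_3=v_1+1$. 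So your strip $v_1<v<v_3$ contains no lattice points at all: the region you propose to work in is empty in precisely the case that must be handled.
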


\begin{proof}
Suppose, for a contradiction, that the three landmarks form a multiset resolving set.  In particular they form an ordinary resolving set, so the boundary inequalities of \cref{lem:boundary} hold.

Suppose first that
\[
 v_1<v_2<v_3.
\]
By \cref{lem:boundary},
\[
 u_1+v_1\le -m,
 \qquad
 u_3+v_3\ge m.
\]
On the other hand, membership in $\D_m$ gives the reverse inequalities.  Hence
\[
 u_1+v_1=-m,
 \qquad
 u_3+v_3=m.
\]
The remaining two boundary inequalities become
\[
 u_1-v_3\le -m,
 \qquad
 u_3-v_1\ge m.
\]
Substituting the two equalities above gives $v_1+v_3\ge0$ and $v_1+v_3\le0$.  Therefore
\[
 (u_3,v_3)=-(u_1,v_1),
\]
and \cref{lem:antipodal} applies.  The decreasing monotone order is equivalent by reflection.

It remains to consider the zigzag orbit.  By symmetry, assume
\begin{equation}\label{eq:zigzagorder}
 v_3<v_1<v_2.
\end{equation}
The last inequality in \eqref{eq:boundaryineq}, together with $p_3\in\D_m$, forces
\begin{equation}\label{eq:p3edge}
 u_3-v_3=m.
\end{equation}
Indeed, $u_3-v_3\ge m$, while $u_3-v_3\le |u_3|+|v_3|\le m$.  In particular $u_3\ge0\ge v_3$ and $v_3=u_3-m$.  Moreover $u_3\ge1$: if $u_3=0$, then $u_+=0$, and the third boundary inequality gives $v_+=v_2=m$, which forces $u_2=0=u_3$, contradicting the strict ordering of the $u_i$.

If $u_2\le u_3-2$, then the two vertices
\[
 P=(u_3-2,v_3),
 \qquad
 Q=(u_3-1,v_3+1)
\]
belong to $\D_m$ and have equal ordered distance vectors to all three landmarks.  Membership uses $u_3\ge1$: since $v_3=u_3-m$, we get $|u_3-2|+|v_3|=m-2$ for $u_3\ge2$ and $=m$ for $u_3=1$, while $|u_3-1|+|v_3+1|=m-2$ for $u_3\le m-1$ and $=m$ for $u_3=m$; both points inherit the parity of $p_3$.  For $i=1,2$, the $u$-distance increases by one from $P$ to $Q$ while the $v$-distance decreases by one; for $i=3$, both $L^1$-distances equal $2$.  Hence ordinary resolvability requires
\begin{equation}\label{eq:uadj}
 u_2=u_3-1.
\end{equation}
We next show that ordinary resolvability also requires
\begin{equation}\label{eq:vadj}
 v_1=v_3+1.
\end{equation}
Indeed, suppose that $v_1\ge v_3+2$.  In view of \eqref{eq:uadj}, the two vertices
\[
 P'=(u_3,v_3+2),
 \qquad
 Q'=(u_3-1,v_3+1)
\]
belong to $\D_m$.  To see this, note first that $v_3\ne0$: otherwise $u_3=m$, and the strict inequalities $0=v_3<v_1<v_2$ are incompatible with $p_2=(m-1,v_2)\in\D_m$.  Hence $v_3\le-1$.  If $v_3=-1$, then $P'=(m-1,1)$ lies on the boundary of the diamond; if $v_3\le-2$, then $|P'_u|+|P'_v|=m-2$.  In either case $P'\in\D_m$, while $|Q'_u|+|Q'_v|=m-2$; parity is preserved for both points.  For $i=1,2$, the $u$-distance decreases by one from $P'$ to $Q'$ while the $v$-distance increases by one, and both points have $L^1$-distance $2$ from $p_3$.  Thus $P'$ and $Q'$ have equal ordered distance vectors, a contradiction.  Since $v_1>v_3$, \eqref{eq:vadj} follows.

Write
\[
 v_3=-a-1,
 \qquad
 L=m-a.
\]
Then \eqref{eq:p3edge} gives $u_3=L-1$.  Relations \eqref{eq:uadj} and \eqref{eq:vadj}, the boundary inequalities, the diamond constraint, and the parity condition yield
\begin{equation}\label{eq:zigzagnormal}
\begin{aligned}
 p_1&=(-L,-a),\\
 p_2&=(L-2,a+2),\\
 p_3&=(L-1,-a-1),
\end{aligned}
\qquad
0\le a\le m-2.
\end{equation}
For completeness, the only numerical choices left by the inequalities are
\[
 -L\le u_1\le -L+1,
 \qquad
 a+1\le v_2\le a+2;
\]
the parity condition selects $u_1=-L$ and $v_2=a+2$.

Returning to the original coordinates via \eqref{eq:inverse}, the landmarks are
\[
 (0,a),
 \qquad
 (m,m-a-2),
 \qquad
 (m-a-1,m).
\]
The distinct vertices
\[
 U=(m-a-2,m),
 \qquad
 V=(m-a,m)
\]
have the same ordered distance vector
\[
 (L,a+2,1).
\]
Thus the zigzag case is not even ordinarily resolving.
\end{proof}

\section{The case of a repeated rotated coordinate}\label{sec:repeated}

By symmetry, it remains to consider landmarks for which two $v$-coordinates are equal.  Write
\[
 p_1=(r_1,t),
 \qquad
 p_2=(r_2,t),
 \qquad
 p_3=(r_3,s),
\]
where $r_1<r_2$ and, after reflection, $s>t$.

If all three $v$-coordinates are equal, \cref{lem:boundary} and the diamond constraints force $t=0$, $u_-=-m$, and $u_+=m$, so the landmark set contains an antipodal pair.  We may therefore assume that exactly two $v$-coordinates are equal.

\begin{lemma}[Normal form]\label{lem:normalform}
If the three landmarks satisfy the boundary inequalities and contain exactly two equal $v$-coordinates, then either they contain an antipodal pair or, after a symmetry of the square, they have the following form in the original coordinates:
\begin{equation}\label{eq:normalform}
 A=(0,a),
 \qquad
 C=(L,m),
 \qquad
 B=(b,c),
\end{equation}
where
\begin{equation}\label{eq:normalconstraints}
 L=m-a>0,
 \qquad
 0\le b,c\le m,
 \qquad
 b-c\ge a.
\end{equation}
\end{lemma}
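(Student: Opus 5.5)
The plan is to work entirely in rotated coordinates, where the boundary inequalities of \cref{lem:boundary} combine with the diamond constraint $|u|+|v|\le m$ to pin landmarks onto edges of $\D_m$. Write $p_1=(r_1,t)$, $p_2=(r_2,t)$, $p_3=(r_3,s)$ with $r_1<r_2$ and $s>t$, so that $v_-=t$ and $v_+=s$. Since $r_1<r_2$, the minimum $u_-$ is attained by $p_1$ or $p_3$, and the maximum $u_+$ by $p_2$ or $p_3$, and $p_3$ cannot attain both. This gives three cases: (a) $r_1<r_3<r_2$; (b) $r_3>r_2$; (c) $r_3<r_1$. Case (c) is the image of case (b) under the reflection $(u,v)\mapsto(-u,v)$, which preserves the $v$-coordinates, so only (a) and (b) need treatment. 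Ties such as $r_3=r_1$ are harmless in the argument below; I would absorb them into the case where the tied landmark is the one with $v=t$.

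The recurring tool is the \emph{edge-forcing} step already used in \cref{lem:distinct}. If a landmark $(u,v)$ attains an extreme and a boundary inequality reads, for example, $v_+-u\ge m$ or $u+v_-\le -m$, then $|u|+|v|\le m$ forces equality, puts the landmark on the corresponding edge of the diamond, and fixes the signs of $u$ and $v$.

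In case (a), $u_-=r_1$ and $u_+=r_2$. The inequalities $u_-+v_-\le -m$ and $u_+-v_-\ge m$ give $r_1+t=-m$ and $r_2-t=m$. The mixed inequalities $u_--v_+\le-m$ and $u_++v_+\ge m$ then give $s-r_1=m$ and $r_2+s=m$, by the same forcing argument. Together these yield $t=-s$, $r_1=s-m$ and $r_2=m-s$, with $s>0$. The value $s=m$ is impossible, since it would make $r_1=r_2=0$. Converting back with \eqref{eq:inverse} gives $p_1=(0,s)$ and $p_2=(m-s,m)$. So I set $a=s$, $L=m-a>0$, $A=p_1$ and $C=p_2$. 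The third landmark $B=p_3$ has $v$-coordinate $b-c=s=a$, so the constraint $b-c\ge a$ in \eqref{eq:normalconstraints} holds, here with equality.

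In case (b), $u_-=r_1$ and $u_+=r_3$. As in (a), $r_1+t=-m$ and $s-r_1=m$, so again $t=-s$ and $p_1=(0,s)$ in original coordinates. Now $r_3+s\ge m$ and $r_3-t=r_3+s\ge m$, so $p_3$ lies on the edge $u+v=m$, that is, $p_3=(m,m-s)$ in original coordinates. The landmark $p_2$ is then the middle one, with $v$-coordinate $-s$.

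This is the step I expect to be the main obstacle. A naive match puts $p_2$ in the role of $B$, which gives $b-c=-s<a$. So I would apply a symmetry of the square, such as $(u,v)\mapsto(-v,-u)$ or the transposition $x\leftrightarrow y$, that carries the edge containing $p_3$ to the top side $y=m$ and moves $p_1$ to the left side. I would then recompute the constraint on $B$ in the new frame. If no symmetry produces $b-c\ge a$, the remaining option is to show that the boundary data force $p_2=-p_1$ or $p_2=-p_3$, so that the configuration is the antipodal alternative of the lemma. Concretely, I would use the unused inequality $u_+-v_-\ge m$, now $r_3+s\ge m$, together with the exact edge positions to limit $r_2$ to a short interval, and check the endpoints against antipodality. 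Finally I would record, exactly as in \cref{lem:distinct}, that the parity condition removes any off-by-one choices left by the inequalities.
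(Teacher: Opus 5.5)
There is a genuine gap in case (b), and neither of your proposed remedies can close it. In fact you have already finished this case without noticing. The points $p_1=(0,s)$ and $p_3=(m,m-s)$ are exchanged by the half-turn $(x,y)\mapsto(m-x,m-y)$, i.e.\ $p_3=-p_1$ in rotated coordinates. So the triple contains an antipodal pair, and the lemma's first alternative holds regardless of $p_2$. This is exactly how the paper handles $r_3>r_2$, and by reflection $r_3<r_1$.

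Your fallback ``$p_2=-p_1$ or $p_2=-p_3$'' is impossible, since $-p_1=p_3$ and $-p_3=p_1$. A symmetry into the normal form is also impossible. In the normal form, the equal-$v$ landmarks $A,C$ realise both $u_-=-L$ and $u_+=L$, because $B$ has $v=b-c\ge a$ and hence $|u|\le m-a=L$. Here the $u_i$ are distinct, so only the sign changes keep a repeated $v$-coordinate. After any sign change, $p_3$ still strictly realises one $u$-extreme that the equal-$v$ pair $\{p_1,p_2\}$ misses.

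Second, your edge-forcing step is used where it does not apply. It gives equality only when a single landmark realises both extremes in the inequality. In the mixed inequalities $u_--v_+\le -m$ and $u_++v_+\ge m$ of case (a), $u_-=r_1$ and $u_+=r_2$ come from $p_1,p_2$, while $v_+=s$ comes from $p_3$. So all you get is $s\ge -t$. The equality $s=-t$ that you derive is false in general. For $m=10$, the landmarks $p_1=(-8,-2)$, $p_2=(8,-2)$, $p_3=(0,4)$ satisfy parity and all four boundary inequalities, yet $s=4\ne 2=-t$.

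The repair is the paper's argument. Set $a=-t$ (not $a=s$), so $p_1=(-L,-a)$ and $p_2=(L,-a)$, with $L>0$ because $r_1<r_2$. The inequality $s\ge a$ is then exactly $b-c\ge a$.

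In case (b) the same misuse gives $s-r_1=m$. The consequence $s+t=0$ happens to be true there, but it must be derived differently. Force $r_1+t=-m$ through $p_1$ and $r_3+s=m$ through $p_3$. Substituting these into the two mixed inequalities gives $s+t\ge 0$ and $s+t\le 0$.
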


\begin{proof}
If $r_3>r_2$, then $u_-=r_1$, $u_+=r_3$, $v_-=t$, and $v_+=s$.  The first and third boundary inequalities are equalities because the corresponding points lie in $\D_m$:
\[
 r_1+t=-m,
 \qquad
 r_3+s=m.
\]
The other two inequalities imply $s+t\ge0$ and $s+t\le0$.  Hence $s=-t$ and $r_3=-r_1$, so $p_1$ and $p_3$ are antipodal.  The case $r_3<r_1$ is symmetric.

We may therefore assume $r_1\le r_3\le r_2$.  Now $u_-=r_1$, $u_+=r_2$, $v_-=t$, and $v_+=s$.  The first and fourth boundary inequalities, together with the diamond constraints for $p_1$ and $p_2$, force
\[
 r_1+t=-m,
 \qquad
 r_2-t=m.
\]
Thus, for some $a\in\{0,\dots,m-1\}$ and $L=m-a$,
\[
 p_1=(-L,-a),
 \qquad
 p_2=(L,-a).
\]
The remaining boundary inequalities imply $s\ge a$.  Under the inverse map \eqref{eq:inverse}, the two equal-$v$ landmarks become $A=(0,a)$ and $C=(L,m)$, while $p_3=(r_3,s)$ becomes $B=(b,c)$ with $s=b-c\ge a$.  This is exactly \eqref{eq:normalform}--\eqref{eq:normalconstraints}.
\end{proof}

We now eliminate the normal form by explicit collisions.

\begin{lemma}[Collision table]\label{lem:collisiontable}
No landmark set satisfying \eqref{eq:normalform}--\eqref{eq:normalconstraints} is multiset resolving.
\end{lemma}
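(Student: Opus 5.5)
The plan is to exhibit, for every admissible triple $(a,b,c)$, an explicit pair of distinct vertices with the same multiset representation. The pairs come from a few explicit families, organised by a case split on the position of $B$.

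The guiding observation is the reflection $\sigma(x,y)=(m-y,m-x)$ in the antidiagonal $x+y=m$. It is an automorphism of $\King_m$, and it interchanges the landmarks: $\sigma(A)=(m-a,m)=C$ and $\sigma(C)=(0,a)=A$.

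\textbf{Step 1: $B$ on the antidiagonal.} If $b+c=m$, then $\sigma(B)=B$. Hence for every vertex $X$ off the antidiagonal,
\[
 r(\sigma X\mid A,B,C)=\bigl(\dist(X,C),\dist(X,B),\dist(X,A)\bigr).
\]
So $X$ and $\sigma X$ have equal multiset representations. Taking $X=(0,0)$ gives the collision.

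\textbf{Step 2: $B$ off the antidiagonal.} Here $\sigma(B)-B=(k,k)$ with $k=m-b-c\neq 0$. A computation shows that the Chebyshev bisector of two diagonally offset points is only the antidiagonal, so the swap $A\leftrightarrow C$ cannot be combined with equidistance from $B$. I would instead look for pairs whose ordered vectors coincide outright, in the spirit of the proof of \cref{lem:boundary}.

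For each landmark I would write down the set of $x$ for which two adjacent vertices $(x,j)$ and $(x+1,j)$ on a fixed row (or column) are equidistant from it. This set is an interval in $x$ whose endpoints are read off from the rotated coordinates, exactly as in the proof of \cref{lem:boundary}. I would then search for a row or column on which the three intervals intersect.

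The natural candidates are the following.
\begin{itemize}[leftmargin=2em]
\item The top side near $C$: vertices $(L-1,m)$ and $(L+1,m)$, which are both at distance $1$ from $C$ and at distances $L-1$ and $L+1$ from $A$ when $a\ge1$. This is the same shape as the zigzag collision of \cref{lem:distinct}.
\item The left side near $A$.
\item Pairs at the corner $(m,0)$ region. This region is relevant because $b-c\ge a$ places $B$ weakly below the line through $A$ parallel to the main diagonal.
\end{itemize}

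Where no ordered collision exists, I would fall back on multiset collisions that swap two of the distances. A typical example is the pair $(x,m)$ and $(x',m)$ with $\dist(\cdot,A)$ and $\dist(\cdot,B)$ exchanged.

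\textbf{Step 3: the subcases.} I would split according to which coordinate realises $\dist(\cdot,B)$ on the relevant side, that is, whether $c$ is small relative to $m-b$. I would also treat the degenerate values $a=0$ and $L=1$ separately, since there $A$ or $C$ sits at a corner. In each subcase I would verify membership of the two vertices in $\King_m$ and equality of their three distances directly from \eqref{eq:cheb}.

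I expect the main obstacle to be Step 2. The constraint \eqref{eq:normalconstraints} leaves a genuinely two-parameter family of positions for $B$, and I would need a small collision table covering every region without gaps. Before writing the proof, I would first find the pairs computationally for small $m$ and then guess the uniform formulas.
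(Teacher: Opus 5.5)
Your Step~1 is correct: if $b+c=m$ then $\sigma$ fixes $B$ and swaps $A$ and $C$, so $(0,0)$ and $(m,m)$ collide. The gap is Step~2, which covers essentially every normal-form triple. It is not an argument: it names regions where you would search and proposes to guess formulas from computer data. The lemma consists precisely of explicit colliding pairs covering all admissible $(a,b,c)$, and you supply none.

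The tactic you favour also cannot work in general. Normal-form triples already satisfy the boundary inequalities, and they can be ordinary resolving sets. When they are, no search for equal \emph{ordered} vectors, such as intersecting equidistance intervals as in the boundary lemma, can succeed. For example, take $m=4$, $a=1$, $A=(0,1)$, $B=(4,0)$, $C=(3,4)$. This triple meets the normal-form constraints, and a direct check shows that all $25$ ordered distance vectors are distinct. Every collision is therefore a genuine permutation, such as $r((2,4)\mid A,B,C)=(3,4,1)$ against $r((1,0)\mid A,B,C)=(1,3,4)$.

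Your one concrete candidate is miscomputed. Since $m-a=L$, we have $\dist((L-1,m),A)=L$ and $\dist((L+1,m),A)=L+1$, so $A$ separates that pair. The zigzag pair works only because there the top landmark sits at $(L-1,m)$, not $(L,m)$. Your fallback, top-row pairs exchanging the $A$- and $B$-distances, also fails on the example: the five top-row multisets $\ms{3,3,4}$, $\ms{2,3,4}$, $\ms{1,3,4}$, $\ms{0,3,4}$, $\ms{1,4,4}$ are pairwise distinct.

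What is missing is a uniform case analysis with explicit permuted collisions. The paper proceeds as follows.
\begin{itemize}
\item It first treats $b\le L$, $c\ge a$, where $B$ lies in the axis-parallel square spanned by $A$ and $C$. It takes $U=(b,a)$ and $V=(0,c)$, whose vectors $(b,c-a,L)$ and $(c-a,b,L)$ swap the $A$- and $B$-entries.
\item Otherwise it uses your reflection $\sigma$ (its $\rho$) as a normalisation rather than a source of collisions, reducing to $c<a$, $b+c\le m$.
\item With $\delta=a-c$ and $x=b-L$ one has $x\le\delta$, and $x\ge 0$ when $\delta>L$.
\item A five-row table then finishes the proof. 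The case $\delta\le L$ is split by $b<L$, $b=L$, $b>L$, and the case $\delta>L$ by $x\le 2L$, $x>2L$. Each row gives two vertices, usually one on the top row and one in row $c$ or at height $a$, whose ordered vectors are permutations of each other.
\end{itemize}
Your example falls in the row $\delta\le L$, $b>L$, with $U=(L-\delta,m)$ and $V=(x,c)$. Until you supply such a covering family and check membership in $\King_m$ and the stated maxima in each case, Step~2, and hence the lemma, remains unproved.
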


\begin{proof}
First suppose that $B$ lies in the axis-parallel square with opposite corners $A$ and $C$, that is,
\[
 b\le L,
 \qquad
 c\ge a.
\]
Take
\[
 U=(b,a),
 \qquad
 V=(0,c).
\]
Then
\[
 r(U\mid A,B,C)=(b,c-a,L),
 \qquad
 r(V\mid A,B,C)=(c-a,b,L),
\]
so the multisets agree.  The vertices $U$ and $V$ are distinct: equality would force $b=0$ and $a=c$, and then the constraint $b-c\ge a$ would give $a=c=0$, contrary to the landmarks $A$ and $B$ being distinct.

Assume now that $B$ lies outside this square.  The reflection
\[
 \rho(x,y)=(m-y,m-x)
\]
interchanges $A$ and $C$.  After applying $\rho$ if necessary, we may assume
\begin{equation}\label{eq:outside-normal}
 c<a,
 \qquad
 b+c\le m.
\end{equation}
Indeed, if $c\ge a$ then $b>L$ and hence $b+c>m$, while if $c<a$ but $b+c>m$, the same reflection produces \eqref{eq:outside-normal}.

Set
\[
 \delta=a-c>0,
 \qquad
 x=b-L.
\]
The constraints imply $x\le\delta$.  If $\delta>L$, they also imply $x\ge0$.  The five rows of the following table are pairwise disjoint and exhaustive: when $\delta\le L$ the trichotomy $b<L$, $b=L$, $b>L$ applies, and when $\delta>L$ (so $x\ge0$) either $x\le2L$ or $x>2L$.  Each row gives two distinct grid vertices with the same distance multiset to $A,B,C$.

\medskip
\begin{center}
\renewcommand{\arraystretch}{1.35}
\begin{tabularx}{\textwidth}{>{\raggedright\arraybackslash}p{0.23\textwidth} >{\centering\arraybackslash}p{0.18\textwidth} >{\centering\arraybackslash}p{0.18\textwidth} X}
\toprule
Condition & $U$ & $V$ & Common multiset \\
\midrule
$\delta\le L$, $b<L$
& $(b,m)$
& $(L,c)$
& $\ms{L,L+\delta,L-b}$ \\
$\delta\le L$, $b=L$
& $B$
& $C$
& $\ms{0,L,L+\delta}$ \\
$\delta\le L$, $b>L$
& $(L-\delta,m)$
& $(x,c)$
& $\ms{\delta,L,L+\delta}$ \\
$\delta>L$, $x\le2L$
& $(x,c+L)$
& $(x,a)$
& $\ms{x,L,\delta}$ \\
$\delta>L$, $x>2L$
& $(2L+x-\delta,c)$
& $(\delta,m)$
& $\ms{\delta,\delta-L,L+\delta}$ \\
\bottomrule
\end{tabularx}
\end{center}
\medskip

We verify the rows briefly.  In the first row, for example,
\[
 r(U\mid A,B,C)=(L,L+\delta,L-b),
\]
whereas
\[
 r(V\mid A,B,C)=(L,L-b,L+\delta).
\]
The second row is immediate.  In the third row, the vectors are
\[
 (L,L+\delta,\delta)
 \quad\text{and}\quad
 (\delta,L,L+\delta).
\]
In the fourth row they are
\[
 (x,L,\delta)
 \quad\text{and}\quad
 (x,\delta,L),
\]
and in the fifth row they are
\[
 (\delta,\delta-L,L+\delta)
 \quad\text{and}\quad
 (\delta,L+\delta,\delta-L).
\]
The inequalities in each row ensure that all listed coordinates lie in $\{0,\dots,m\}$ and that the displayed maxima have the stated values.  In every row the two vertices are distinct: in the first row because $b<L$, in the second because $B\ne C$, in the third and fifth because the second coordinates are $m$ and $c<a\le m$, and in the fourth because $c+L=a$ would mean $\delta=L$.  Hence every normal-form triple has a collision.
\end{proof}

\section{Proof of \texorpdfstring{\cref{thm:square}}{the square king grid theorem}}\label{sec:proofsquare}

\begin{proof}[Proof of \cref{thm:square}]
Hakanen and Yero~\cite[Proposition~4.2 and Theorem~4.3]{HakanenYero2024} proved the upper bound
\[
 \mdim(P_n\boxtimes P_n)\le4
 \qquad (n\ge5).
\]
It remains to rule out a multiset resolving set of cardinality three.

Let $S$ be any three vertices.  If $S$ is not an ordinary resolving set, then it is certainly not multiset resolving.  Otherwise the boundary inequalities of \cref{lem:boundary} hold.  If all three $u$-coordinates and all three $v$-coordinates are distinct, \cref{lem:distinct} supplies a collision.  If a rotated coordinate is repeated, a symmetry of the square reduces to the case of repeated $v$-coordinates.  If all three $v$-coordinates are equal, the observation opening \cref{sec:repeated} produces an antipodal pair and \cref{lem:antipodal} applies.  If exactly two are equal, \cref{lem:normalform} yields either an antipodal pair, handled again by \cref{lem:antipodal}, or the normal form, in which case the collision is supplied by \cref{lem:collisiontable}.  Thus no three-vertex multiset resolving set exists.

Since $P_n\boxtimes P_n$ is not a path, its multiset dimension cannot be $1$, and no graph has multiset dimension $2$~\cite{SimanjuntakEtAl}.  Therefore the upper bound $4$ is exact.
\end{proof}

Combining the theorem with the small cases already established in~\cite{HakanenYero2024} gives the complete classification.

\begin{corollary}\label{cor:classification}
For every integer $n\ge2$,
\[
 \mdim(P_n\boxtimes P_n)=
 \begin{cases}
 \infty, & n=2,3,\\
 6, & n=4,\\
 4, & n\ge5.
 \end{cases}
\]
\end{corollary}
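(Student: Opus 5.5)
The corollary splits into three ranges of $n$. For $n\ge5$ the value $4$ is exactly \cref{thm:square}, so nothing new is needed there. For the remaining small cases $n=2,3,4$ the plan is to cite Hakanen and Yero~\cite{HakanenYero2024}, and to check any claim not explicitly stated there directly.

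\emph{The case $n=2$.} Here $P_2\boxtimes P_2=K_4$. In a complete graph any two non-landmark vertices have identical multisets, and any two landmarks do too: both receive $\ms{0,1,\dots,1}$ with $|S|-1$ ones. A resolving set therefore has at most one landmark and at most one non-landmark, which means $n\le2$. For $K_4$ this is impossible, so $\mdim=\infty$.

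\emph{The case $n=3$.} The centre $(1,1)$ is at distance $1$ from every vertex. Consider the symmetry group of the square acting on $S$. If $S$ is invariant under some nontrivial symmetry that moves a vertex, that vertex and its image have equal multisets. Otherwise I will use the distance structure directly: corners, edge-midpoints and the centre have distances only in $\{0,1,2\}$. For each candidate $S$ I will exhibit a colliding pair, for instance two corners $z,z'$ whose multisets both count $|S\cap\{\text{centre}\}|$ plus neighbours at distance $1$ and the rest at distance $2$. This is a finite check, which Hakanen and Yero state and which I will cite. The one step needing care is convincing oneself that every subset fails. The fallback is an exhaustive check of the $2^9$ subsets, which is tiny.

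\emph{The case $n=4$.} The value $6$ is taken from \cite{HakanenYero2024}. Putting the three pieces together gives the piecewise formula.
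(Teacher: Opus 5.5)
Your proposal is correct and follows the paper's route: the value for $n\ge5$ comes from \cref{thm:square}, the small cases $n=2,3,4$ are taken from Hakanen and Yero, and your direct $K_4$ argument is a harmless extra check. Your $n=3$ sketch is only a plan, so its validity rests on the citation (or on the $2^9$-subset check), exactly as in the paper. If you want a self-contained treatment of both $n=2$ and $n=3$, use the diameter-two obstruction the paper invokes in Part~III. In a graph of diameter at most $2$, two landmarks with the same degree in the subgraph induced by $S$ receive the same multiset, and two such landmarks always exist once $|S|\ge2$. A single landmark, on the other hand, separates at most three vertices.
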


\section{Beyond squares: rectangular king grids}\label{sec:rect}

Problem~1 of the survey~\cite{FarhanEtAl2026} asks, more generally, for $\mdim(P_{n_1}\boxtimes P_{n_2})$ for all $n_1,n_2\ge3$.  The method of this part does not settle the rectangular case, and we record here why.  \Cref{lem:l1,lem:boundary} generalise to rectangles with routine changes.  The antipodal obstruction of \cref{lem:antipodal}, however, constructs the colliding pair $X=(s,-r)$, $Y=(-s,r)$ by a quarter-turn of the third landmark, and a quarter-turn is a symmetry of the square only.  This is not a cosmetic issue: the entire monotone case and part of the repeated-coordinate case funnel into \cref{lem:antipodal}.

Computational evidence, reported in the rectangular landscape of \cref{sec:landscape} (Part~III), shows that the rectangular answer is genuinely different.  Three phenomena stand out.  The value $3$ never occurs in the surveyed range, so the lower bound of \cref{thm:square} plausibly extends to all rectangles with $\min(n_1,n_2)\ge5$.  The value $4$ occurs only in a narrow band around the diagonal, while farther from the diagonal the observed values grow.  Finally, the parameter is not monotone along a row: $\mdim(P_4\boxtimes P_{10})>5$ while $\mdim(P_4\boxtimes P_{11})=5$, both cells confirmed by full enumeration in two independent implementations; the height-$4$ scan of \cref{sec:landscape} sharpens the former value to $\mdim(P_4\boxtimes P_{10})=6$.  Part~III determines the height-three case exactly and shows linear growth for every fixed height; a complete determination of the rectangular king grids appears to require new constructions as well as new obstructions, and we leave it as an open problem.

\section{Computational verification of Part II}\label{sec:verifsquare}

The proof above is independent of computation.  Nevertheless, verifiers accompany this manuscript; together with their outputs they are permanently archived (see the data availability statement).  They perform three checks:
\begin{enumerate}[label=(\roman*)]
\item exhaustive enumeration of all three-landmark sets for small square king grids, checking that none is multiset resolving;
\item direct verification of every collision identity in \cref{lem:collisiontable} over a user-selected finite parameter range;
\item the rectangular scan underlying the discussion of \cref{sec:rect}, with independent spot-checks of the extremal cells.
\end{enumerate}
These checks are useful for detecting parity mistakes, omitted boundary cases, or incorrect maxima, but they are not used to infer the infinite family theorem.

\begin{remark}
The collision proof often establishes more than needed.  In the zigzag case the two vertices have identical \emph{ordered} distance vectors.  In the normal-form table, the two ordered vectors are explicit permutations of one another, which is precisely the obstruction created by forgetting landmark labels.
\end{remark}

\partheading{Part III. King strips}

A king strip of height three requires one landmark per column (\cref{thm:strip}), whereas an $n\times n$ king square is resolved by four landmarks (\cref{thm:square}).  The reason is geometric: in a narrow strip, all landmarks are vertically close to every vertex, so distant landmarks measure only horizontal displacement and are blind to the vertical coordinate.  This forces a dense local structure that we capture in three exact per-column conditions (\cref{sec:local}), and the conditions alone already force $n$ landmarks (\cref{sec:lower}).  Conversely, an explicit periodic pattern with exactly one landmark per column on average is a multiset resolving set (\cref{sec:upper}).

In addition, we determine all equality cases of the local lower-bound system (\cref{sec:extremal}): the column sequences attaining the bound are exactly the paths of an explicit finite automaton with a $19$-state recurrent core.  This yields necessary structural restrictions on minimum multiset resolving sets: at most two landmarks per column, two only as the corner pair of a column, with middle-row landmarks always alone in their column.  The local conditions are necessary but not sufficient for resolving; the automaton therefore describes a candidate family containing every minimum multiset resolving set.

The same blindness mechanism, combined with the fact that our landmark pattern applies to every height, yields the two-sided linear bound of \cref{thm:strips} for every fixed height.  \Cref{sec:landscape} reports computed values on rectangles of height up to $7$ (exact whenever the dimension is at most $6$, certified lower bounds ${>}6$ otherwise), which display an irregular transition zone; for instance
\[
 \mdim(P_4\boxtimes P_{11})=5,\quad
 \mdim(P_4\boxtimes P_{12})=6,\quad
 \mdim(P_4\boxtimes P_{13})>6,\quad
 \mdim(P_4\boxtimes P_{14})=6 .
\]

\subsection*{Notation}
Vertices of $P_h\boxtimes P_n$ are written $(x,y)$ with column $x\in\{0,\dots,n-1\}$ and row $y\in\{0,\dots,h-1\}$ (so $y\in\{0,1,2\}$ for the height-three strip); the distance is the Chebyshev metric
\[
 \dist\big((x,y),(a,b)\big)=\max\big(|x-a|,\,|y-b|\big).
\]
For a landmark set $S$ we write $p_x,m_x,q_x\in\{0,1\}$ for the indicators of $(x,0)\in S$, $(x,1)\in S$, $(x,2)\in S$, and we use the convention $p_x=m_x=q_x=0$ for $x\notin\{0,\dots,n-1\}$.  All multisets are denoted $\ms{\cdot}$.

\section{Three local separation conditions}\label{sec:local}

Throughout this section $S$ is a multiset resolving set of $P_3\boxtimes P_n$.  The reflection $\sigma(x,y)=(x,2-y)$ is an automorphism of $P_3\boxtimes P_n$.  The following two lemmas, giving three per-column conditions, isolate exactly which landmarks can separate the three vertices of one column, and what they must satisfy.  In each case the point is that a landmark contributes the \emph{same} value to both vertices of the pair unless it lies in a narrow window, and that the contributions inside the window can compensate each other only in the stated exceptional configuration.

\begin{lemma}[outer pair]\label{lem:A}
Let $x$ be a column.  If
\[
 \big(p_x,\;p_{x-1}+p_{x+1}\big)=\big(q_x,\;q_{x-1}+q_{x+1}\big),
\]
then $m(x,0\mid S)=m(x,2\mid S)$.  Consequently every column of a multiset resolving set satisfies
\[
 \textup{(A)}\qquad \big(p_x,\;p_{x-1}+p_{x+1}\big)\ne\big(q_x,\;q_{x-1}+q_{x+1}\big).
\]
\end{lemma}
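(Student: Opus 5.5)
Fix the column $x$. Since $S$ is multiset resolving, the contrapositive gives (A). For the main claim, compare the distances from $(x,0)$ and $(x,2)$ to each landmark $(a,b)$ and show that the two multisets coincide. The distances are
\[
 \dist((x,0),(a,b))=\max(|x-a|,b),\qquad \dist((x,2),(a,b))=\max(|x-a|,2-b).
\]
Split the landmarks by horizontal offset $k=|x-a|$.

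\emph{Far landmarks.} If $k\ge2$, both distances equal $k$, because $b,2-b\le2\le k$. These landmarks contribute identical entries to both multisets and can be ignored.

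\emph{Middle-row landmarks.} A landmark with $b=1$ gives $\max(k,1)$ to both vertices, so it is also neutral. In particular $m_{x-1},m_x,m_{x+1}$ play no role, which is why they do not appear in the hypothesis.

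\emph{Outer-row landmarks with $k\le1$.} What remains are the landmarks in rows $0$ and $2$ of columns $x-1,x,x+1$.
\begin{itemize}[leftmargin=2em]
\item For $k=0$: $(x,0)$ contributes $0$ to $(x,0)$ and $2$ to $(x,2)$, while $(x,2)$ contributes the reverse.
\item For $k=1$: a row-$0$ landmark contributes $1$ to $(x,0)$ and $2$ to $(x,2)$, while a row-$2$ landmark contributes the reverse.
\end{itemize}
Collecting these, the residual multiset for $(x,0)$ is
\[
 0^{p_x}\,2^{q_x}\,1^{P}\,2^{Q},\qquad P=p_{x-1}+p_{x+1},\quad Q=q_{x-1}+q_{x+1}.
\]
For $(x,2)$ it is
\[
 0^{q_x}\,2^{p_x}\,1^{Q}\,2^{P}.
\]
Under the hypothesis $p_x=q_x$ and $P=Q$ these two residual multisets are identical. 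Adding back the neutral contributions gives $m(x,0\mid S)=m(x,2\mid S)$.

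The only point needing care is the boundary. At $x=0$ or $x=n-1$ the convention that indicators vanish outside $\{0,\dots,n-1\}$ keeps the formulas for $P$ and $Q$ valid. Beyond that the proof is pure bookkeeping, and I expect no real obstacle. One could alternatively invoke the reflection $\sigma$, but the direct count is cleaner, since $S$ need not be $\sigma$-invariant.
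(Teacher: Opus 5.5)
Your proposal is correct and follows essentially the same argument as the paper. Like the paper, you note that only row-$0$ and row-$2$ landmarks at horizontal offset at most $1$ contribute differently to the two vertices. Collecting their contributions gives $\ms{0^{p_x},1^{P},2^{q_x+Q}}$ for $(x,0)$ and $\ms{0^{q_x},1^{Q},2^{p_x+P}}$ for $(x,2)$, and these coincide under the hypothesis.
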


\begin{proof}
Fix a landmark $s=(a,b)$ and write $h=|x-a|$.  The values of $s$ at the pair are $\max(h,b)$ and $\max(h,2-b)$.  If $b=1$ these coincide.  If $h\ge 2$ they equal $h$ in both cases, since $b,2-b\le 2\le h$.  Hence a landmark contributes distinct values only if $b\in\{0,2\}$ and $h\le 1$, in which case the values are $\ms{h,2}$, with $h$ going to the vertex on the landmark's side.

Write $u_0=p_x$, $u_1=p_{x-1}+p_{x+1}$, $w_0=q_x$, $w_1=q_{x-1}+q_{x+1}$.  Collecting contributions, the multiset $m(x,0\mid S)$ contains the sub-multiset
$\ms{0^{u_0},1^{u_1},2^{w_0+w_1}}$ from the window landmarks, and $m(x,2\mid S)$ contains $\ms{0^{w_0},1^{w_1},2^{u_0+u_1}}$, while all other landmarks contribute identical values to both.  If $(u_0,u_1)=(w_0,w_1)$, the two sub-multisets coincide and $m(x,0\mid S)=m(x,2\mid S)$.
\end{proof}

\begin{lemma}[inner pairs]\label{lem:BC}
Let $x$ be a column.  Every multiset resolving set satisfies
\begin{align*}
 \textup{(B)}&\qquad p_x\ne m_x \quad\text{or}\quad q_{x-1}+q_x+q_{x+1}\ge 1,\\
 \textup{(C)}&\qquad q_x\ne m_x \quad\text{or}\quad p_{x-1}+p_x+p_{x+1}\ge 1 .
\end{align*}
\end{lemma}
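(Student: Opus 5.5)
The plan is to mirror the proof of \cref{lem:A}: exhibit a pair of vertices in column $x$ whose multiset representations coincide whenever the condition fails, and argue landmark by landmark that every landmark outside a narrow window contributes equally to both. For (B) the pair is $(x,0)$ and $(x,1)$. We then obtain (C) from (B) by the reflection $\sigma(x,y)=(x,2-y)$. It is an automorphism of $P_3\boxtimes P_n$, it interchanges the indicators $p$ and $q$, and it fixes $m$. So if (C) failed for $S$, then (B) would fail for $\sigma(S)$, which is again multiset resolving.

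To prove (B), we assume $p_x=m_x$ and $q_{x-1}=q_x=q_{x+1}=0$, and show that $m(x,0\mid S)=m(x,1\mid S)$. Fix a landmark $s=(a,b)$ and put $h=|x-a|$. Its values at the two vertices are $\max(h,b)$ and $\max(h,|1-b|)$.

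The key steps, in order, are the following.
\begin{enumerate}[label=(\roman*)]
\item If $h\ge2$, both values equal $h$, because $b\le 2\le h$ and $|1-b|\le1<h$.
\item If $h\le1$ and $b=2$, the landmark is one of $(x-1,2),(x,2),(x+1,2)$, and these are excluded by the hypothesis on the $q$'s.
\item If $h=1$ and $b\in\{0,1\}$, both values equal $1$.
\item If $h=0$, the remaining landmarks are $(x,0)$, which contributes $0$ to $(x,0)$ and $1$ to $(x,1)$, and $(x,1)$, which contributes $1$ to $(x,0)$ and $0$ to $(x,1)$.
\end{enumerate}

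Since $p_x=m_x$, either both landmarks in (iv) are present or neither is. If both are present, the pair adds $\ms{0,1}$ to each representation; if neither is, nothing is added. All other landmarks contribute identical values by (i)--(iii), so the two multisets agree. This contradicts $S$ being multiset resolving.

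No step is a real obstacle; the only care needed is the case bookkeeping in (iii)--(iv). In particular, one must check that the middle-row landmarks in the adjacent columns $x\pm1$ contribute $1$ to both vertices, so that (B) involves only $m_x$ and not $m_{x\pm1}$.
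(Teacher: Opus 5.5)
Your proof is correct and follows essentially the same approach as the paper: the same pair $(x,0),(x,1)$, the same landmark-by-landmark comparison of $\max(h,b)$ with $\max(h,|1-b|)$, and the same reflection $\sigma$ to obtain (C) from (B). The only difference is that the paper writes the differing contributions as $\ms{0^{p_x},1^{m_x},2^{Q}}$ versus $\ms{1^{p_x},0^{m_x},1^{Q}}$ and shows they coincide if and only if $p_x=m_x$ and $Q=0$, whereas you check only the direction the lemma needs.
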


\begin{proof}
We prove (B); (C) follows by the reflection $\sigma$.  Consider the pair $(x,0)$, $(x,1)$ and a landmark $s=(a,b)$ with $h=|x-a|$.  The values are $\max(h,b)$ and $\max(h,|1-b|)$.  For $b=0$ they are $h$ and $\max(h,1)$, distinct only if $h=0$, with values $(0,1)$.  For $b=1$ they are $\max(h,1)$ and $h$, distinct only if $h=0$, with values $(1,0)$.  For $b=2$ they are $\max(h,2)$ and $\max(h,1)$, distinct only if $h\le 1$, with values $(2,1)$.

Hence, with $Q=q_{x-1}+q_x+q_{x+1}$, the differing contributions received by $(x,0)$ form the multiset $\ms{0^{p_x},1^{m_x},2^{Q}}$ and those received by $(x,1)$ form $\ms{1^{p_x},0^{m_x},1^{Q}}$.  These coincide if and only if $p_x=m_x$ and $Q=0$: comparing multiplicities of the value $0$ gives $p_x=m_x$; comparing the value $2$ gives $Q=0$; and conversely.  Thus if (B) fails, then $m(x,0\mid S)=m(x,1\mid S)$.
\end{proof}

\section{The lower bound}\label{sec:lower}

Conditions (A), (B), (C) concern only the indicator triples of consecutive columns.  For the transfer argument we reindex the columns by $1,\dots,n$: set $t_i=(p_{i-1},m_{i-1},q_{i-1})\in\{0,1\}^3$ for $1\le i\le n$, and put $t_0=t_{n+1}=(0,0,0)$.  Define the weight of the sequence $t_1,\dots,t_n$ as $\sum_{i=1}^n|t_i|$, where $|t|$ is the number of ones in $t$, and call the sequence \emph{admissible} if (A), (B), (C) hold at every position $i\in\{1,\dots,n\}$, read on the triples $(t_{i-1},t_i,t_{i+1})$.  By \cref{lem:A,lem:BC}, the indicator triples of every multiset resolving set of $P_3\boxtimes P_n$ form an admissible sequence, whose weight is exactly the number of landmarks.  The lower bound of \cref{thm:strip} therefore follows from:

\begin{proposition}\label{prop:lb}
Every admissible sequence of length $n\ge 3$ has weight at least $n$.
\end{proposition}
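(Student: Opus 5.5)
Every admissible sequence satisfies local constraints on windows of three consecutive triples. The plan is therefore a \emph{discharging} (potential-function) argument: show that the total weight can be redistributed so that every column ends up with charge at least $1$. The abstract says the intended route is a finite min-plus transfer certificate, and I would present exactly that as a discharging rule.

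First I would record the local consequences of (A), (B), (C) for a column $i$ with small weight.
\begin{itemize}
\item If $|t_i|=0$, then $p_i=m_i=q_i=0$. So (B) forces $q_{i-1}+q_{i+1}\ge1$ and (C) forces $p_{i-1}+p_{i+1}\ge1$. Condition (A) then forces $p_{i-1}+p_{i+1}\ne q_{i-1}+q_{i+1}$, so the two neighbours together carry at least three corner landmarks.
\item If $|t_i|=1$, the column is self-sufficient.
\item If $t_i=(0,1,0)$, then (B) and (C) require both a top and a bottom landmark within distance one.
\end{itemize}
Empty columns are the deficit, and they must be paid for by heavy neighbours.

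Next I would set up a transfer: a state consisting of the pair $(t_{i-1},t_i)$ (at most $64$ states) and a potential $\phi$ on states. The goal is to verify the min-plus inequality
\[
|t_{i+1}|-1+\phi(t_i,t_{i+1})-\phi(t_{i-1},t_i)\ \ge\ 0
\]
for every triple $(t_{i-1},t_i,t_{i+1})$ that makes position $i$ admissible. To handle both ends, I would include the boundary state $(0,0,0)$ from $t_0$ and $t_{n+1}$ in the range of $\phi$. Telescoping then gives
\[
\text{weight}-n\ \ge\ \phi(t_0,t_1)-\phi(t_n,t_{n+1}),
\]
where the right-hand side must be $\ge0$. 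The potential would be found by computing the min-plus eigenvector (Bellman--Ford on the finite graph) with eigenvalue $1$. The check is then a finite table, which I would verify by computer and, if feasible, describe in terms of a few charge rules. A natural first guess for the hand version is: every empty column takes charge $\tfrac12$ from each neighbour. One must then check that no column donates more than $|t|-1$. A column with $|t|=2$ flanked by two empty columns would need both of them to get enough from their other sides. Resolving exactly such chains is what the potential handles.

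The main obstacle is this finite verification, specifically the boundary effects and the tight cases. Equality can hold along infinitely many sequences (the paper's $19$-state core), so the potential must be exact, with no slack. I would compute $\phi$ by value iteration restricted to admissible transitions. The argument needs two things: that the minimum mean weight per column over cycles is exactly $1$, and that the boundary potentials are ordered correctly. The hypothesis $n\ge3$ presumably enters only to exclude short boundary-dominated sequences. I would check those by direct enumeration for small lengths and rely on the telescoping certificate otherwise.
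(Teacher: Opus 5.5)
Your method is the paper's: the potential you would compute is, up to sign ($\phi=-V_4$), the vector $V_4$ of \cref{app:v4}, and $\mathcal T V_4=V_4+1$ is exactly your per-transition inequality (\cref{rem:potential}). The gap is the boundary step, which cannot be completed as you set it up.

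First, summing your inequality over $i=1,\dots,n$ charges $|t_2|,\dots,|t_{n+1}|$, so the correct telescoped bound is
\[
 \text{weight}-n\;\ge\;|t_1|+\phi(t_0,t_1)-\phi(t_n,t_{n+1}).
\]
You dropped $|t_1|$, and that term is needed. The sequence $100,100,\dots,100$ is admissible with weight exactly $n$. For it, every valid potential has $\phi(t_0,t_1)-\phi(t_n,t_{n+1})\le -1$, so your requirement that the right-hand side be $\ge 0$ fails for every $\phi$.

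Second, even with $|t_1|$ restored, the potential produced by value iteration is infinite at $(000,010)$. No admissible transition enters that state: a column $000$ followed by $010$ violates (A) once (B) and (C) hold. Hence $(\mathcal T V)(000,010)=\infty$ for every $V$, and the appendix indeed lists $V_4(000,010)=\infty$. Yet $(000,010)$ does occur as the start state $(t_0,t_1)$, e.g.\ for $010,100,\dots,100$. So with $\phi=-V_4$ your telescoped bound is vacuous for every sequence beginning with $010$, at every length, and enumerating short sequences does not cover this.

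Both defects are repairable. Let $V$ be $V_4$ with the entry at $(000,010)$ set to $4$; its four admissible successors $(010,w)$ with $p_w\ne q_w$ require exactly this value. With $|t_1|$ included, check that $\max_{t_1\ne 000}\bigl(V(000,t_1)-|t_1|\bigr)=3\le 3=\min_{t_n\ne 000}V_4(t_n,000)$. This telescoping then works and even gives the bound for all $n\ge1$.

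The paper avoids the issue by not telescoping from position $1$ at all. It computes the exact prefix minima $V_L=\mathcal T^{L-1}V_1$ starting from the left boundary, so $V_4$ is only evaluated at states reached at positions $\ge 4$, where it is finite. It gets $V_{L+1}=V_L+1$ from $V_5=V_4+1$ by monotonicity and shift-invariance of $\mathcal T$. It handles the right end by minimizing over states that admit a legal last column, giving $W(n)=W(4)+(n-4)$ with $W(3)=3$ and $W(4)=4$.

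Separately, your third bullet is wrong. For $t_i=010$, (B) and (C) hold automatically because $p_i\ne m_i$ and $q_i\ne m_i$. Only (A), i.e.\ $p_{i-1}+p_{i+1}\ne q_{i-1}+q_{i+1}$, constrains the neighbours.
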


\begin{proof}
For a pair $(u,v)$ of triples let $\mathcal T$ be the transfer operator on vectors $V:\{0,1\}^3\times\{0,1\}^3\to\mathbb Z\cup\{\infty\}$ given by
\begin{multline*}
 (\mathcal T V)(v,w)\;=\;\min\big\{\,V(u,v)+|w| \;:\; u\in\{0,1\}^3,\\
 \textup{(A),(B),(C) hold at the middle column of }(u,v,w)\,\big\},
\end{multline*}
where $|w|$ is the number of ones in $w$.  Let $V_1(0,v)=|v|$ and $V_1=\infty$ elsewhere; then $V_L=\mathcal T^{\,L-1}V_1$ records the minimum weight of an admissible prefix of length $L$ ending in the given two triples, and the minimum weight of an admissible sequence of length $n$ equals
\[
 W(n)=\min\big\{\,V_n(u,v)\;:\;\textup{(A),(B),(C) hold at the last column of }(u,v,0)\,\big\}.
\]
The operator $\mathcal T$ is monotone and commutes with the addition of constants: $\mathcal T(V+c)=\mathcal T(V)+c$.  A direct computation of the $64$-entry min-plus vectors $V_4,V_5\in(\mathbb Z\cup\{\infty\})^{64}$ gives
\[
 V_5\;=\;V_4+1 .
\]
By induction, $V_{L+1}=V_L+1$ for all $L\ge 4$, hence $W(n)=W(4)+(n-4)$ for all $n\ge 4$.  Computing $W(3)=3$ and $W(4)=4$ finishes the proof.  The finite entries of $V_4$ are listed in \cref{app:v4}, and the verification script is included in the ancillary files.
\end{proof}

\begin{remark}\label{rem:potential}
The vector $V_4$ is itself a transition-by-transition potential: the identity $\mathcal T V_4=V_4+1$ states precisely that every admissible transition $(u,v)\to(v,w)$ satisfies
\[
 V_4(u,v)+|w|\;\ge\;V_4(v,w)+1,
\]
an inequality valid in $\mathbb Z\cup\{\infty\}$.  Whenever both state values are finite, this reads $|w|+V_4(u,v)-V_4(v,w)\ge 1$, so the growth rate $1$ can be checked transition by transition against the table of \cref{app:v4}.
\end{remark}

\section{The structure of extremal admissible sequences}\label{sec:extremal}

The bound of \cref{prop:lb} is attained for every $n\ge 6$ (\cref{sec:upper}), and the potential form of the certificate describes \emph{all} sequences attaining it.  Call an admissible transition $(u,v)\to(v,w)$ between states with finite $V_4$ values \emph{tight} if equality holds in the inequality of \cref{rem:potential}, that is, $|w|+V_4(u,v)-V_4(v,w)=1$.  The $184$ tight transitions can be read off the table of \cref{app:v4}.

\begin{theorem}[equality case]\label{thm:extremal}
Let $n\ge 5$ and let $t_1,\dots,t_n$ be an admissible sequence.  Its weight equals $n$ if and only if
\begin{enumerate}[label=\textup{(\alph*)},leftmargin=2.5em]
\item\label{it:optpre} $|t_1|+|t_2|+|t_3|+|t_4|=V_4(t_3,t_4)$: the prefix is a minimum-weight admissible $4$-prefix for its state;
\item\label{it:tight} the transition $(t_{x-1},t_x)\to(t_x,t_{x+1})$ is tight for every $4\le x\le n-1$;
\item\label{it:acc} $V_4(t_{n-1},t_n)=4$.
\end{enumerate}
Thus the weight-$n$ admissible sequences are exactly the paths of a finite automaton: enter through one of the $320$ minimum-weight $4$-prefixes, follow tight transitions, and stop in one of the twelve states with $V_4=4$ that admit a legal last column.
\end{theorem}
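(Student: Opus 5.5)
The plan is to telescope the potential inequality of \cref{rem:potential} along the sequence. For an admissible sequence $t_1,\dots,t_n$ with $n\ge5$, I write the weight as
\[
 \sum_{i=1}^n|t_i| \;=\; \Bigl(\sum_{i=1}^4|t_i|\Bigr) + \sum_{x=4}^{n-1}|t_{x+1}|,
\]
and introduce the slacks. The prefix slack is $\epsilon_0=\sum_{i\le4}|t_i|-V_4(t_3,t_4)\ge0$. It is nonnegative because $V_4(t_3,t_4)$ is, by definition of $V_4=\mathcal T^3V_1$, the minimum weight of an admissible $4$-prefix ending in $(t_3,t_4)$. Here one has to note that admissibility at positions $1,2,3$ is exactly what the transfer operator checks, with position $1$ read against $t_0=0$ through $V_1$. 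The transition slacks are $\epsilon_x=|t_{x+1}|+V_4(t_{x-1},t_x)-V_4(t_x,t_{x+1})-1\ge0$ for $4\le x\le n-1$, each nonnegative by \cref{rem:potential}. Every state that occurs has finite $V_4$: the first by the prefix bound, and the rest inductively, since a finite left side forces a finite right side. Summing gives the exact identity
\[
 \sum_{i=1}^n|t_i| \;=\; V_4(t_{n-1},t_n) + (n-4) + \epsilon_0+\sum_{x=4}^{n-1}\epsilon_x .
\]

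Next I need the terminal inequality $V_4(t_{n-1},t_n)\ge 4$ for every state $(t_{n-1},t_n)$ that admits a legal last column, meaning (A),(B),(C) hold at position $n$ against $t_{n+1}=0$. I would obtain this from the finite table of \cref{app:v4}: list the states whose last-column condition holds, and check that their $V_4$ values are at least $4$, with equality exactly at the twelve stated states. As a sanity check, this is consistent with $W(4)=4$ in \cref{prop:lb}, because a legal terminal state with $V_4$ value $3$ would give an admissible length-$4$ sequence of weight $3$. This argument genuinely covers every terminal state, since $V_4(u,v)$ is realized by some prefix that ends in $(u,v)$, and appending the legal last column makes that prefix an admissible sequence of length $4$. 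So the inequality follows from $W(4)=4$ without any table lookup.

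With these pieces, the weight is $n$ plus the nonnegative quantity $(V_4(t_{n-1},t_n)-4)+\epsilon_0+\sum_x\epsilon_x$. Hence the weight equals $n$ if and only if all three parts vanish, and these three vanishings are exactly \ref{it:optpre}, \ref{it:tight} and \ref{it:acc}. The converse direction needs nothing extra: if \ref{it:optpre}--\ref{it:acc} hold for an admissible sequence, the identity gives weight $n$ directly. Admissibility at the last column is part of the hypothesis, and it is what makes the final state one of those twelve.

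The automaton description then follows by counting. The $320$ minimum-weight $4$-prefixes, the $184$ tight transitions and the twelve accepting states are finite enumerations read off \cref{app:v4}, together with the ancillary script. I expect the main obstacle to be bookkeeping rather than ideas. One must check that $V_4(t_3,t_4)$ really is the minimum over admissible prefixes, with the boundary conventions at $t_0$ matching $V_1$, so that $\epsilon_0\ge0$ holds and equality characterizes optimal prefixes. One must also keep the index range of the tight transitions aligned, with $x$ from $4$ to $n-1$ so that exactly $n-4$ terms appear.
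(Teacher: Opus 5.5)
Your proposal is correct and follows the paper's proof. Like the paper, you telescope the potential inequality of \cref{rem:potential} with a prefix slack, transition slacks, and the terminal slack $V_4(t_{n-1},t_n)-4\ge 0$ supplied by $W(4)=4$. The only difference is cosmetic: you get finiteness of the visited states inductively from the potential inequality, whereas the paper reads it off from $V_x=V_4+(x-4)$, and both arguments are valid.
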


\begin{proof}
Every value $V_4(t_{x-1},t_x)$ with $4\le x\le n$ is finite: the length-$x$ prefix of the sequence is admissible, so $V_x(t_{x-1},t_x)<\infty$, and $V_x=V_4+(x-4)$ by \cref{prop:lb}.  Writing the total weight as the prefix weight plus the later columns and inserting the potential inequality of \cref{rem:potential} with slack $s_x\ge 0$,
\begin{align*}
\sum_{x=1}^n|t_x| &= \Big(V_4(t_3,t_4)+\sigma_0\Big)+\sum_{x=4}^{n-1}\Big(1+V_4(t_x,t_{x+1})-V_4(t_{x-1},t_x)+s_x\Big)\\
&= n+\sigma_0+\sum_{x=4}^{n-1}s_x+\big(V_4(t_{n-1},t_n)-4\big),
\end{align*}
where $\sigma_0\ge 0$ because $V_4(t_3,t_4)$ is the minimum weight of an admissible $4$-prefix, and $V_4(t_{n-1},t_n)\ge 4$ because the conditions hold at position $n$ (with $t_{n+1}=0$) and $W(4)=4$ is the minimum of $V_4$ over such states.  All three slack contributions are nonnegative, so the weight equals $n$ exactly when all of them vanish, which is the conjunction of \ref{it:optpre}, \ref{it:tight} and \ref{it:acc}.
\end{proof}

\begin{corollary}[necessary structure of optimal landmark sets; computer-assisted]\label{cor:structure}
Let $n\ge 6$ and let $S$ be a multiset resolving set of $P_3\boxtimes P_n$ of the minimum size $n$.  Then in every column of the strip, $S$ contains at most two landmarks; if it contains two, they are the corner pair $(x,0),(x,2)$; and a middle-row landmark $(x,1)$ is always alone in its column.  Equivalently, every column triple $t_x$ belongs to
\[
 \{\,000,\;001,\;010,\;100,\;101\,\} .
\]
\end{corollary}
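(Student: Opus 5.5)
The plan is to reduce the corollary to \cref{thm:extremal}, using \cref{lem:A,lem:BC}. Let $S$ be a multiset resolving set of size $n$ with $n\ge 6$. By those two lemmas, its column triples $t_1,\dots,t_n$ form an admissible sequence. Its weight is $|S|=n$. By \cref{prop:lb} this is the minimum possible weight, so \cref{thm:extremal} applies: $t_1\cdots t_n$ is a path in the finite automaton. That is, it has a minimum-weight $4$-prefix, then a chain of tight transitions, then an accepting final state. The claim therefore becomes a finite statement: every column triple that occurs on some accepting path of length $n\ge 6$ lies in $\{000,001,010,100,101\}$.

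To verify this finite statement, I would build the automaton explicitly. The states are the pairs $(u,v)$ with $V_4(u,v)<\infty$, taken from \cref{app:v4}. The edges are the $184$ tight transitions. The entry set consists of the states reached by the $320$ optimal $4$-prefixes, and the exit set consists of the twelve accepting states.

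The key step is to prune this graph to its useful part. For each length, I compute forward reachability from the entry states and backward co-reachability from the exit states. A state or edge is usable at length $n$ if it lies on some entry-to-exit path with exactly $n-4$ tight transitions. Because the graph is finite, the set of usable states stabilizes as $n$ grows, except possibly for periodicity. So I would compute it for $n=6,\dots,N$, with $N$ beyond the number of states plus a period. That covers all $n\ge 6$: a state lies on the recurrent core or within bounded distance of an endpoint, so long paths realize the same set of usable columns.

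One point needs care: the columns $t_1,t_2,t_3$ inside the prefix. They must be checked separately, by listing all $320$ optimal prefixes that extend to an accepting path of length $n$. Only these prefixes count, not all $320$; some of them may contain a column such as $011$ or $111$ but never extend tightly to an accepting path.

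The final step is to read off the columns that appear. They should be exactly $000,001,010,100,101$. This says at most two landmarks per column, a pair only as the corner pair, and a middle landmark always alone.

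The main obstacle I expect is making the ``for every $n\ge 6$'' part rigorous rather than empirical. The argument needs an explicit stabilization statement for the usable-state set, namely a uniform bound on how far entry and exit states sit from the $19$-state recurrent core. The case $n=6,7$ must also be handled separately, since a short path might use a prefix that no long path uses. If I cannot do this by hand, I would present it as a computer-assisted certificate, as the statement itself indicates.
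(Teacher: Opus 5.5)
Your reduction is the same as the paper's. The column word is a weight-$n$ admissible sequence, \cref{thm:extremal} turns it into an accepted path of the tight-transition automaton, and the corollary becomes a finite check. The gap is in the step you flag yourself. You compute usable states only for paths of \emph{exact} length $n-4$, for $n=6,\dots,N$, and then appeal to stabilization ``beyond the number of states plus a period''. That bound is not justified: eventual-periodicity indices for path lengths in a finite digraph can be quadratic in the number of states. You also give no argument that the columns used for $n\le N$ already cover those used for $n>N$. Your separate worry about short lengths such as $n=6,7$ is likewise left open.

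The missing observation is that the corollary is only a necessary condition. You may therefore overapproximate and drop the length constraint entirely. Let $\mathcal L$ be the set of states reachable, by any number of tight transitions, from the entry states and co-reachable to the accepting states. Take any weight-$n$ admissible sequence and any $4\le x\le n$. The state $(t_{x-1},t_x)$ is reached from the entry state $(t_3,t_4)$ and leads to the accepting state $(t_{n-1},t_n)$, so it lies in $\mathcal L$ regardless of $n$.

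The paper's one-shot reachability computation gives $|\mathcal L|=19$: a single strongly connected set containing the twelve accepting states, whose second components are exactly the five types. The prefix columns $t_1,t_2,t_3$ come from an optimal $4$-prefix ending in $\mathcal L$. There are $106$ such prefixes, and all use only the five types. This handles every $n\ge 6$ uniformly, with no stabilization lemma and no separate small cases, and it is precisely the paper's proof.
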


\begin{proof}
The column word of $S$ is a weight-$n$ admissible sequence, so \cref{thm:extremal} applies.  A finite computation (ancillary files) shows that the tight states reachable from the minimum-weight $4$-prefix states and co-reachable to the accepting states form a single strongly connected set $\mathcal L$ of $19$ states, listed in \cref{app:v4}; the second components of $\mathcal L$ are exactly the five stated column types, the twelve accepting states lie in $\mathcal L$, and the $106$ minimum-weight $4$-prefixes ending in $\mathcal L$ use only the five types in their first three columns.  The state $(t_{x-1},t_x)$ of a weight-$n$ sequence lies in $\mathcal L$ for every $4\le x\le n$, and the columns $t_1,t_2,t_3$ belong to one of those $106$ prefixes, so every column has one of the five types.
\end{proof}

\begin{remark}\label{rem:automaton}
The family of weight-$n$ admissible sequences has exponential growth but admits a finite-state description.  The period-three pattern $S_n$ of \cref{sec:upper} travels around the tight $3$-cycle $(000,100)\to(100,101)\to(101,000)$, but the automaton of \cref{thm:extremal} contains many other cycles.  The adjacency matrix $A$ of the recurrent part $\mathcal L$ admits the positive integer vector $w$ of \cref{app:v4} with $Aw=3w$; since $\mathcal L$ is strongly connected and contains loops, Perron--Frobenius theory gives that the spectral radius of $A$ is exactly $3$, and the number of weight-$n$ admissible sequences grows as $\Theta(3^n)$.  The exact counts ($188$ for $n=5$, $556$ for $n=6$, $44{,}972$ for $n=10$, \dots) agree with the automaton path counts for all $n\le 200$ (ancillary files).
\end{remark}

\begin{remark}\label{rem:necessity}
The admissibility system is necessary but not sufficient for resolving, so \cref{thm:extremal} classifies the equality cases of the \emph{local} lower-bound system, and \cref{cor:structure} extracts necessary restrictions --- not a classification of the minimum resolving sets themselves.  Indeed, none of the $188$ weight-$5$ admissible sequences is a multiset resolving set (consistently with $\mdim(P_3\boxtimes P_5)=6$), and for $n=6$ exactly $32$ of the $556$ accepted sequences are multiset resolving sets; the others fail globally --- for instance the accepted sequence with landmark set $\{(0,2),(2,0),(2,2),(3,2),(4,0),(5,2)\}$ gives $m(2,2\mid S)=m(3,2\mid S)=\ms{0,1,2,2,2,3}$.  The automaton is thus a $\Theta(3^n)$-sized candidate family containing every minimum multiset resolving set; the global multiset separation imposes further restrictions that the local conditions do not capture.
\end{remark}

\section{The upper bound}\label{sec:upper}

\subsection{The pattern}
The construction is uniform in the height, so we present it in general.  Let $h\ge 3$.  For $n\ge 7$ define the landmark set $S^{(h)}_n\subseteq V(P_h\boxtimes P_n)$ of size $n$ as follows.  Write $n=3B+r$ with $r\in\{0,1,2\}$ and let the number of \emph{full blocks} be $B$ if $r\in\{1,2\}$ and $B-1$ if $r=0$.  For each full block $j\ge 0$ place the triple
\[
 (3j,0),\qquad (3j+1,0),\qquad (3j+1,h-1),
\]
and place a single landmark $(c,0)$ in every remaining column $c$ (there are $r$ of them if $r\in\{1,2\}$, and three if $r=0$).  Then $|S^{(h)}_n|=n$: each full block contributes three landmarks for three columns and each remaining column contributes one.  We abbreviate $S_n=S^{(3)}_n$.

All landmarks lie in the rows $0$ and $h-1$.  Two structural facts will be used repeatedly:
\begin{enumerate}[label=(\roman*),leftmargin=2.5em]
\item\label{it:dom} every column containing a row-$(h-1)$ landmark also contains a row-$0$ landmark (the block columns $3j+1$ carry both);
\item\label{it:gap} the only columns without a row-$0$ landmark are the block third columns $3j+2$, and each of these is at horizontal distance $1$ from the row-$(h-1)$ landmark in column $3j+1$.  In particular every column of the strip is at horizontal distance at most $1$ from a row-$0$ landmark column.
\end{enumerate}

Let $c(a)\in\{0,1,2\}$ denote the number of landmarks of $S^{(h)}_n$ in column $a$ (so $c=1,2,0,1,2,0,\dots$ on the blocks, followed by the tail $1$; $1,1$; or $1,1,1$), with $c(a)=0$ for $a\notin\{0,\dots,n-1\}$; the word $c$ does not depend on $h$.

\begin{theorem}\label{thm:ub}
For every $h\ge 3$ and every $n\ge 2h+1$ the set $S^{(h)}_n$ is a multiset resolving set of $P_h\boxtimes P_n$; hence $\mdim(P_h\boxtimes P_n)\le n$.
\end{theorem}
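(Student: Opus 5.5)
The plan is to show that the multiset $m(z\mid S^{(h)}_n)$ determines the vertex $z=(x,y)$ in two stages: first the column $x$, then the row $y$ within that column. Every landmark lies in row $0$ or row $h-1$, so for $z=(x,y)$ the distance to a landmark $(a,0)$ is $\max(|x-a|,y)$ and to $(a,h-1)$ it is $\max(|x-a|,h-1-y)$. Hence the multiset is obtained from the horizontal-distance multiset $H(x)=\ms{|x-a| : (a,b)\in S}$, whose multiplicities are read off the column-count word $c$, by replacing the entries below $y$ (for row-$0$ landmarks) or below $h-1-y$ (for row-$(h-1)$ landmarks) with that threshold. The entries of $m(z\mid S)$ that are at least $\max(y,h-1-y)+1\le h$ are therefore the true horizontal distances.

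\emph{Stage 1: the column is determined.} Since $n\ge 2h+1$, every column $x$ has, on at least one side, landmarks at all horizontal distances from $h$ up to roughly $\max(x,n-1-x)\ge h$. Consequently the truncated profile $k\mapsto\#\{a: |x-a|=k\}$ for $k\ge h$ is visible in the multiset. I would compute this profile from $c$: it is $c(x+k)+c(x-k)$, and once the left side runs out it becomes a single shifted copy of $c$. Two facts then identify $x$. The largest distance is $\max(x,n-1-x)$, which fixes $x$ up to the reflection $x\leftrightarrow n-1-x$. The reflection ambiguity is broken because the word $c$ is not palindromic: the blocks read $1,2,0$, while the tail reads $1$, $1,1$ or $1,1,1$. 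So the pattern of counts near the far end (the $0$-entries of $c$ at the columns $3j+2$) differs between the two orientations. Turning this into a clean case argument on $x \bmod 3$ and the tail type $r$ is routine but needs care near the middle column.

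\emph{Stage 2: the row is determined.} With $x$ fixed, $H(x)$ is known, and I compare the multisets of $(x,y)$ and $(x,y')$ with $y<y'$. By fact~\ref{it:gap}, some row-$0$ landmark lies at horizontal distance $\le 1$. Its contribution $\max(\le1,y)$ records $y$ when $y\ge1$. For $y=0$ versus $y=1$ one uses instead the landmark in column $x$ or the row-$(h-1)$ partner, and fact~\ref{it:dom} guarantees that the row-$(h-1)$ landmarks always sit above a row-$0$ landmark. I would count the number of multiset entries equal to each value $t\le h-1$ as a function of $y$. This count is monotone in $y$ on the row-$0$ side, and the row-$(h-1)$ side contributes at most $2$ landmarks within horizontal distance $\le1$. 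I would then show that the smallest entry of the multiset, together with its multiplicity, separates $y$ from $y'$.

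The main obstacle is the interaction between the two stages. A vertex in one column might mimic a vertex in another column with a different row, because the truncation at the threshold $\max(y,h-1-y)$ collapses small horizontal distances. For this reason I would not fully separate the stages. Instead I would first use the large entries (those $\ge h$, which are uncollapsed for every row) to pin down $x$, and I expect $n\ge 2h+1$ to be exactly what guarantees enough such entries on the longer side. Checking the boundary columns and the residues of $n \bmod 3$ is where the casework, and any failure, will concentrate.
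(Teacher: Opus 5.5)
\emph{Verdict: Stage~1 is sound, but Stage~2 has a genuine gap: the criterion you plan to finish with is false.}

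\textbf{Stage 1.} This is essentially the paper's argument. For $d\ge h$ the multiplicity of $d$ in $m(x,y\mid S^{(h)}_n)$ is $c(x-d)+c(x+d)$, independently of $y$, so the two stages do not interact. The top shell gives $\max(x,n-1-x)$. The reflection is broken at the shell $\max(x,n-1-x)-1$, which is at least $h$ because $n\ge 2h+1$. The asymmetry there is $c(1)=2$ against $c(n-2)\in\{0,1\}$. For adjacent central columns both ends fall into that shell, and one gets $1+c(n-2)=3$, which is also impossible.

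\textbf{Stage 2.} You intend to finish by showing that the smallest near entry, together with its multiplicity, separates $y$ from $y'$. This fails, as two examples show.
For $h=3$, $n=7$, take $S_7=\{(0,0),(1,0),(1,2),(3,0),(4,0),(4,2),(6,0)\}$. The vertices $(1,0)$ and $(1,2)$ have multisets $\ms{0,1,2,2,3,3,5}$ and $\ms{0,2,2,2,3,3,5}$. Both have minimum $0$ with multiplicity $1$. In general, in every column $3j+1$ both row $0$ and row $h-1$ are landmarks.
For $h=4$, $n=9$, the vertices $(2,0)$ and $(2,1)$ have $\ms{1,1,2,2,3,3,4,5,6}$ and $\ms{1,1,2,2,2,2,4,5,6}$. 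Again the minimum and its multiplicity agree.
In both cases the separating information sits at a larger value, so you must compare all thresholds $t\le h-1$.

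\textbf{What is missing.} Let $\alpha(t)$ and $\beta(t)$ be the numbers of row-$0$ and row-$(h-1)$ landmarks within horizontal distance $t$ of $x$. The number of entries $\le t$ at $(x,y)$ is $\alpha(t)[t\ge y]+\beta(t)[t\ge h-1-y]$. So for $y<y'$ a collision means
$\alpha(t)\mathbf{1}_{[y,\,y'-1]}(t)=\beta(t)\mathbf{1}_{[h-1-y',\,h-2-y]}(t)$ for all $t$.

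\emph{Symmetric pairs $y'=h-1-y$.} These are the crux. The two intervals coincide and always contain some $t\ge1$. So you need the strict inequality $\beta(t)<\alpha(t)$ for every $t\ge 1$. Fact (i) gives only $\beta\le\alpha$. Strictness requires checking that every window $[x-t,x+t]\cap[0,n-1]$ with $t\ge1$ contains a column carrying a row-$0$ landmark and no row-$(h-1)$ landmark, i.e.\ a block first column $3j$ or a tail column. This must include the truncated two-column windows at $x\in\{0,n-1\}$.

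\emph{Pairs with $y+y'\ne h-1$.} Here you also need $\alpha(t)\ge1$ for $t\ge1$, and $\alpha(0)=0\Rightarrow\beta(1)\ge1$; both follow from fact (ii). Then an endpoint of one interval lies outside the other, which forces $\alpha$ or $\beta$ to vanish at a point where these facts forbid it.
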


The proof splits the multiset of a vertex into a \emph{far part} (values at least $h$), which sees only the column, and a \emph{near part} (values at most $h-1$), which identifies the row.

\begin{lemma}[shell decomposition]\label{lem:shell}
For every vertex $(x,y)$ and every $d\ge h$, the multiplicity of the value $d$ in $m(x,y\mid S^{(h)}_n)$ equals
\[
 F_x(d)\;=\;c(x-d)+c(x+d),
\]
independently of $y$.
\end{lemma}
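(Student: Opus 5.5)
The plan is to compute, for each landmark $s=(a,b)\in S^{(h)}_n$, the distance $\max(|x-a|,|y-b|)$ from a vertex $(x,y)$, and to see exactly when this distance can equal some $d\ge h$. The key observation is that the vertical displacement $|y-b|$ is at most $h-1$, because both $y$ and $b$ lie in $\{0,\dots,h-1\}$. Hence for $d\ge h$,
\[
 \max(|x-a|,|y-b|)=d \iff |x-a|=d,
\]
since the vertical term can never reach $d$. When $|x-a|=d\ge h>h-1\ge|y-b|$, the maximum is exactly $|x-a|=d$. Conversely, if the maximum equals $d\ge h$, it cannot be attained by the vertical term, so it must equal $|x-a|$. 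This single equivalence carries the whole argument.

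With it in hand, the multiplicity of $d$ in $m(x,y\mid S^{(h)}_n)$ is the number of landmarks whose column $a$ satisfies $|x-a|=d$. The candidate columns are $a=x-d$ and $a=x+d$, and these are distinct because $d\ge h\ge 3>0$. Each candidate contributes the number of landmarks in that column, namely $c(a)$. The convention $c(a)=0$ for $a\notin\{0,\dots,n-1\}$ takes care of candidates lying off the strip. Summing the two contributions gives $F_x(d)=c(x-d)+c(x+d)$, and the coordinate $y$ never enters the count.

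There is no real obstacle here; the only point needing care is the strict inequality $d\ge h>h-1$, which excludes ties coming from the vertical coordinate. It is also worth noting that the argument never uses the particular structure of $S^{(h)}_n$: it holds for any landmark set on a strip of height $h$, with $c$ taken to be the column-count function of that set.
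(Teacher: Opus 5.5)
Your proposal is correct and is essentially the paper's one-line argument: since $|y-b|\le h-1<d$, a landmark $(a,b)$ is at distance $d\ge h$ from $(x,y)$ exactly when $|x-a|=d$, so the multiplicity is $c(x-d)+c(x+d)$ independently of $y$. Your extra remarks are also correct: the two candidate columns are distinct because $d>0$, and the lemma holds for any landmark set once $c$ is taken to be that set's column-count function.
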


\begin{proof}
For a landmark $(a,b)$ we have $\max(|x-a|,|y-b|)=d\ge h$ if and only if $|x-a|=d$, because $|y-b|\le h-1$.
\end{proof}

\begin{lemma}[columns are determined]\label{lem:S}
Let $h\ge 3$ and $n\ge 2h+1$.  If $F_x=F_{x'}$ as functions on $\{h,h+1,\dots\}$, then $x=x'$.
\end{lemma}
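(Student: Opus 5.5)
The plan is to read off $x$ from the outermost nonzero shell, and then to break the remaining left--right ambiguity using the asymmetry of the column word $c$ at its two ends.

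\emph{Step 1: the outermost shell.} Column $0$ carries a landmark, since $c(0)=1$ in the first block or the tail. Column $n-1$ also carries one, since the tail column always has $c(n-1)=1$. On the other hand $c(a)=0$ outside $\{0,\dots,n-1\}$. Hence for every column $x$ the largest $d$ with $F_x(d)>0$ is
\[
D(x)=\max(x,\,n-1-x).
\]
Because $n\ge 2h+1$, we have $D(x)\ge\lceil (n-1)/2\rceil\ge h$, so this value lies in the range $\{h,h+1,\dots\}$ on which $F_x$ is given. Therefore $F_x=F_{x'}$ forces $D(x)=D(x')$, and so $x'\in\{x,\,n-1-x\}$.

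\emph{Step 2: excluding the mirror column.} It remains to show $F_x\neq F_{x'}$ when $x'=n-1-x\neq x$. By symmetry take $x<x'$, so that $D:=D(x)=D(x')=x'=n-1-x$. I would compare the shell $d=D-1$:
\[
F_x(D-1)=c(n-2)+c(2x-n+2),\qquad F_{x'}(D-1)=c(1)+c(2x'-n+2)=c(1)+c(n-2x).
\]
This shell is admissible as soon as $D-1\ge h$. The key asymmetry is that $c(1)=2$, because column $1$ is the middle column of the first full block. By contrast, $c(n-2)\in\{0,1\}$ in all three residue cases: for $r=1$ the word ends $\dots,2,0,1$; for $r=2$ it ends $\dots,1,1$; for $r=0$ it ends $\dots,1,1,1$.

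When $x$ is far from the middle, more precisely when $2x-n+2<0$, the extra term $c(2x-n+2)$ vanishes. Then $F_x(D-1)\le1<2\le F_{x'}(D-1)$, and we are done.

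\emph{Step 3: the near-middle cases (main obstacle).} What remains is the finite set of columns with $2x\ge n-2$ and $x<n-1-x$. These are $x=(n-2)/2$ for even $n$, and $x=(n-3)/2$ for odd $n$. In these cases the indices $2x-n+2$ and $n-2x$ are tiny (in $\{0,1\}$ and $\{2,3\}$, respectively), so the relevant $c$-values are explicit.

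I would handle these by direct substitution:
\begin{itemize}
\item compare $c(n-2)+c(0\text{ or }1)$ with $2+c(2\text{ or }3)$, using $c(2)=0$, $c(3)=1$, $c(0)=1$, $c(1)=2$;
\item where the $(D-1)$-shell ties, or where $D-1<h$ (this can only occur when $n\le 2h+2$, with $D=h$ or $h+1$), pass to the shell $d=D$ itself or to $d=D-2$, and separate by the residue $r$ of $n$ mod $3$.
\end{itemize}
For instance, $D=h$ only when $n=2h+1$ and $x=h$ is the middle column, where $x'=x$ and nothing needs proving. For $n=2h+2$ we have $D=h+1$ and $D-1=h$ is still admissible.

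This small case check, with its bookkeeping of the tail patterns against the fixed prefix $1,2,0,1,\dots$, is where I expect the real care to be needed. The structural argument of Steps 1--2 is routine.
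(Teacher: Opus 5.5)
Your Steps 1 and 2 follow the paper's route: the outermost shell gives $D(x)$, and the $(D-1)$-shell separates mirror columns via $c(1)=2>c(n-2)$. Step 2 is correct, but only because it uses just the lower bound $F_{x'}(D-1)\ge c(1)$. The gap is Step 3. You leave it as an unexecuted plan, and that plan rests on a miscomputed formula.

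Since $D=x'$, the second term of $F_{x'}(D-1)$ is $c(x'+D-1)=c(2x'-1)=c(2n-3-2x)$, not $c(n-2x)$. You substituted $x'$ into the expression derived for $x$, where $D=n-1-x$ rather than $D=x'$. Because $2x\le n-2$, this index is at least $n-1$. So the term equals $c(n-1)=1$ when $2x=n-2$, and $0$ otherwise. Your odd case $x=(n-3)/2$ has $2x-n+2=-1$, so Step 2 already covers it. The only residual case is $n$ even, $x=(n-2)/2$, $x'=n/2$. The $(D-1)$-shell is always admissible when $x<x'$, since $D-1=n-2-x\ge h$.

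With your formula, the residual case compares $F_x(D-1)=c(n-2)+c(0)$ against $2+c(2)=2$. That is a tie whenever $c(n-2)=1$, i.e.\ whenever $r\in\{0,2\}$. Your fallbacks do not rescue it:
\begin{itemize}
\item The shell $d=D$ always ties, since $F_x(D)=c(n-1)=1=c(0)=F_{x'}(D)$.
\item The shell $d=D-2=h-1$ lies outside the admissible range when $n=2h+2$.
\end{itemize}
For example, $h=3$, $n=8$, $x=3$, $x'=4$ would be left undecided.

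The fix is the corrected term. In the residual case $F_{x'}(D-1)=c(1)+c(n-1)=3$, while $F_x(D-1)=c(0)+c(n-2)=1+c(n-2)\le 2$. So the $(D-1)$-shell separates directly. This is exactly the paper's argument. Writing $s=x'-x$, it evaluates the shells $d=(n-1-x)-j$ with $j\le 1$. For $s\ge 2$ it gets $c(n-2)=c(1)$, and for $s=1$ it gets $1+c(n-2)=c(1)+c(n-1)=3$. Both are impossible since $c(n-2)\in\{0,1\}$, and no residue-by-residue analysis or other shells are needed.
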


\begin{proof}
The extreme landmark columns are $0$ and $n-1$ in every tail case, with $c(0)=c(n-1)=1$.  Hence the largest $d$ with $F_x(d)>0$ equals $D(x)=\max(x,\,n-1-x)$ whenever $D(x)\ge h$, which holds for all columns since $D(x)\ge(n-1)/2\ge h$.  From $D(x)=D(x')$ we get $x'\in\{x,\,n-1-x\}$.

Assume for contradiction $x'=n-1-x\ne x$ and put $s=x'-x>0$; without loss of generality $x<(n-1)/2$.  Then $x\le n-2-h$: for $n\ge 2h+2$ this follows from $x\le(n-2)/2$, and for $n=2h+1$ from $x\le h-1$.  Hence the shells below, taken at $d=(n-1-x)-j$ with $j\le 1$, are admissible ($d\ge h$).  Evaluating $F_x$ and $F_{x'}$ at $d=(n-1-x)-j$ and using $2x-(n-1)=-s$ gives
\[
 F_x(d)=c(j-s)+c(n-1-j),
 \qquad
 F_{x'}(d)=c(j)+c\big(n-1-(j-s)\big).
\]
For $0\le j<s$ the terms $c(j-s)$ and $c(n-1-(j-s))$ vanish, so equality of the shells forces
\begin{equation}\label{eq:pal}
 c(n-1-j)=c(j)\qquad(0\le j<s,\ j\le 1).
\end{equation}
The word $c$ starts $1,2,0,1,2,0,\dots$ from the left, while from the right the word $j\mapsto c(n-1-j)$ starts
\[
 1,0,2,1,0,2,\dots\ (r=1),\qquad
 1,1,0,2,1,0,\dots\ (r=2),\qquad
 1,1,1,0,2,1,\dots\ (r=0).
\]
If $s\ge 2$, then \eqref{eq:pal} at $j=1$ gives $c(n-2)=c(1)=2$, which is false in all three cases.  If $s=1$, take $j=s=1$: equality of the shells reads $c(0)+c(n-2)=c(1)+c(n-1)$, i.e.\ $1+c(n-2)=2+1$, forcing $c(n-2)=2$, again false in all three cases.  Hence $x'=x$.
\end{proof}

\begin{lemma}[rows are determined]\label{lem:V}
Let $h\ge 3$, $n\ge 7$, and let $x$ be any column.  For $y\in\{0,\dots,h-1\}$ let $N(x,y)$ denote the restriction of $m(x,y\mid S^{(h)}_n)$ to the values at most $h-1$.  Then the multisets $N(x,y)$, $y=0,\dots,h-1$, are pairwise distinct.
\end{lemma}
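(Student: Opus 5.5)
The plan is to exploit the monotonicity of Chebyshev distances in the row coordinate, using the fact that all landmarks of $S^{(h)}_n$ lie in rows $0$ and $h-1$. I would encode each near part $N(x,y)$ through its cumulative counting function
\[
 f_y(t)=\#\{\text{entries of }N(x,y)\text{ that are }\le t\},\qquad 0\le t\le h-1.
\]
For $t\ge 0$ let $B(t)$ be the number of row-$0$ landmarks $(a,0)$ with $|x-a|\le t$, and $T(t)$ the number of row-$(h-1)$ landmarks with $|x-a|\le t$. A row-$0$ landmark gives the value $\max(|x-a|,y)$, which is $\le t$ exactly when $|x-a|\le t$ and $y\le t$. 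A row-$(h-1)$ landmark gives $\max(|x-a|,h-1-y)$, which is $\le t$ exactly when $|x-a|\le t$ and $h-1-y\le t$. Hence
\[
 f_y(t)=B(t)\,[\,y\le t\,]+T(t)\,[\,h-1-y\le t\,],
\]
and it suffices to show that the functions $f_y$, $0\le y\le h-1$, are pairwise distinct.

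The key structural input is an inequality $B(t)>T(t)$ for $t\ge 1$, together with $B(t)\ge 1$ for $t\ge1$. By fact~\ref{it:dom} every column carrying a row-$(h-1)$ landmark also carries a row-$0$ landmark, so $B(t)\ge T(t)$ for all $t$. For strictness I would check that within horizontal distance $1$ of any column $x$ there is a column with a row-$0$ landmark but no row-$(h-1)$ landmark. The candidates are the block first columns $3j$ and the tail columns, read off from the word $c=1,2,0,1,2,0,\dots$ and the three tail cases; the hypothesis $n\ge7$ ensures at least two full blocks or a long enough tail. Fact~\ref{it:gap} supplies $B(1)\ge1$.

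Now suppose $y<y'$ and $f_y=f_{y'}$. I would split into the following cases.
\begin{itemize}[leftmargin=2em]
\item If $y\le h-1-y$ (that is, $y$ lies in the lower half) and $y\ge1$, evaluate at $t=y$. Then $f_y(y)\ge B(y)$, while $f_{y'}(y)$ is either $0$ or $T(y)$, according to whether $h-1-y'\le y$. Since $B(y)>T(y)$ and $B(y)\ge1$, the two values differ.
\item If $y> h-1-y$, then $y'$ is also in the upper half. Evaluating at $t=h-1-y'$ separates them: $y$ contributes $T$ alone, while $y'$ contributes at most $T$ and possibly nothing. This needs a little care, and I would alternatively compare at $t=y$, where $f_y$ has gained $B(y)+T(y)$ and $f_{y'}$ only $T(y)$.
\item If $y=0$, evaluate at $t=0$ when $B(0)\ge1$. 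When $B(0)=0$, that is, $x=3j+2$, I would instead use $t=1$ combined with the exact column counts near $x$.
\end{itemize}

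The main obstacle I expect is exactly this bookkeeping at small $t$ ($t=0$, where $B(0)$ may vanish) and at the right end of the strip, where the tail columns alter the local pattern. That is where the strict inequality $B(t)>T(t)$ and the hypothesis $n\ge7$ must be checked against the explicit tail words. I would first attempt a uniform argument using $t=\max(1,\min(y,h-1-y'))$, and fall back on a short finite case check of the columns $x\in\{0,1,2,n-3,n-2,n-1\}$ if needed.
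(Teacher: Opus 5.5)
\textbf{The gap is the case $y=0$.} The rest of your proof is sound: the cumulative function $f_y(t)=B(t)[y\le t]+T(t)[h-1-y\le t]$ and the strict inequality $B(t)>T(t)$ for $t\ge1$ are exactly the paper's ingredients. Your evaluation at $t=y$ gives $f_y(y)-f_{y'}(y)\ge B(y)-T(y)>0$, which settles every pair with $y\ge1$. Two side remarks. Your first suggestion in the upper-half case, $t=h-1-y'$, has the roles reversed: there $f_y=0$ and $f_{y'}=T(h-1-y')$, which can vanish. The uniform choice $t=\max(1,\min(y,h-1-y'))$ also fails, e.g.\ for $h=5$, $(y,y')=(2,3)$ at $x=n-1$ with $r=0$, where $f_2(1)=f_3(1)=0$.

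For $y=0$, both of your proposed tests break in interior block columns, not at the ends of the strip.
\begin{itemize}
\item With $B(0)\ge1$ and $y'=h-1$, you get $f_0(0)=B(0)$ and $f_{h-1}(0)=T(0)$. In every block middle column $x=3j+1$ both equal $1$, because that column carries landmarks in both rows. You must move to $t=1$, where $f_0(1)=B(1)>T(1)=f_{h-1}(1)$.
\item With $B(0)=0$ (so $x=3j+2$), $y'=1$ and $h\ge4$, one has $f_0(1)=f_1(1)=B(1)$. No count at $t=1$ can separate this pair, whatever the nearby column counts are. You never stated the missing ingredient: a block third column is adjacent to column $3j+1$, which carries a row-$(h-1)$ landmark, so $T(1)\ge1$. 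Evaluating at $t=h-2$ then gives $f_{y'}(h-2)-f_0(h-2)=T(h-2)\ge T(1)\ge1$ for every $1\le y'\le h-2$.
\end{itemize}
These failures occur throughout the strip and for unbounded $h$, so your fallback finite check of $x\in\{0,1,2,n-3,n-2,n-1\}$ cannot repair them.

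The paper avoids this bookkeeping by writing $f_y-f_{y'}=B\,\mathbf 1_{[y,\,y'-1]}-T\,\mathbf 1_{[h-1-y',\,h-2-y]}$ and splitting on the sign of $y+y'-(h-1)$. It also states the auxiliary fact $B(0)=0\Rightarrow T(1)\ge1$ explicitly; that fact is precisely what the block-third-column case needs.
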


\begin{proof}
A landmark $(a,b)$ with $|x-a|\ge h$ contributes the value $|x-a|\ge h$; a landmark with $|x-a|\le h-1$ contributes $\max(|x-a|,y)\le h-1$ if $b=0$, and $\max(|x-a|,h-1-y)\le h-1$ if $b=h-1$.  Hence, writing $A$ (resp.\ $B$) for the multiset of horizontal distances $|x-a|\le h-1$ of the row-$0$ (resp.\ row-$(h-1)$) landmarks from the column $x$,
\[
 N(x,y)\;=\;\ms{\max(a,y):a\in A}\uplus\ms{\max(b,h-1-y):b\in B}.
\]
For $0\le t\le h-1$ let $\alpha(t)=\#\{a\in A:a\le t\}$ and $\beta(t)=\#\{b\in B:b\le t\}$.  Counting the values at most $t$ in $N(x,y)$ gives the cumulative function
\begin{equation}\label{eq:cum}
 C_y(t)\;=\;\alpha(t)\,[t\ge y]\;+\;\beta(t)\,[t\ge h-1-y],
\end{equation}
where $[\cdot]$ is the Iverson bracket; and $N(x,y)=N(x,y')$ if and only if $C_y(t)=C_{y'}(t)$ for all $0\le t\le h-1$.

The structural facts \ref{it:dom} and \ref{it:gap} translate as follows.
\begin{itemize}[leftmargin=2em]
\item By \ref{it:gap}, $\alpha(t)\ge 1$ for every $t\ge 1$; and if $\alpha(0)=0$ then $x$ is a block third column, whose neighbouring column carries a row-$(h-1)$ landmark, so $\beta(1)\ge 1$.
\item For every $t\ge 1$ we claim $\beta(t)<\alpha(t)$.  By \ref{it:dom} each column contributing to $\beta(t)$ also contributes to $\alpha(t)$, so it suffices to find, in the window $[x-t,\,x+t]\cap[0,n-1]$, one column with a row-$0$ landmark but no row-$(h-1)$ landmark.  The window contains at least two consecutive columns of the strip.  If it contains a tail column we are done, since tail columns carry a row-$0$ landmark only.  Otherwise the window lies in the block region; if it has three consecutive columns it contains a block first column $3j$, which qualifies.  A truncated two-column window occurs only for $x\in\{0,n-1\}$, $t=1$: the window $\{0,1\}$ contains the block first column $0$, and the window $\{n-2,n-1\}$ contains a tail column in each of the three tail cases.
\end{itemize}

Now suppose $C_y=C_{y'}$ with $0\le y<y'\le h-1$, and put $z=h-1-y$, $z'=h-1-y'$.  Subtracting the two instances of \eqref{eq:cum},
\[
 \alpha(t)\,\mathbf 1_{I_A}(t)\;=\;\beta(t)\,\mathbf 1_{I_B}(t)
 \qquad(0\le t\le h-1),
\]
where $I_A=[y,\,y'-1]$ and $I_B=[z',\,z-1]$ are nonempty intervals of the same length.  We distinguish three cases.

\emph{Case $y+y'=h-1$.}  Then $I_A=I_B$ and the identity reads $\alpha(t)=\beta(t)$ on $I_A$.  But $I_A$ contains some $t\ge 1$: take $t=y$ if $y\ge 1$, and $t=1\in[0,h-2]$ if $y=0$ (here $h\ge 3$).  This contradicts $\beta(t)<\alpha(t)$.

\emph{Case $y+y'<h-1$.}  Then $y<z'$ and $y'-1<z-1$, so $t=y\in I_A\setminus I_B$ forces $\alpha(y)=0$; hence $y=0$ and $\alpha(0)=0$, so $\beta(1)\ge 1$.  On the other hand $t=z-1=h-2\in I_B\setminus I_A$ forces $\beta(h-2)=0$, contradicting $\beta(h-2)\ge\beta(1)\ge 1$.

\emph{Case $y+y'>h-1$.}  Then $t=y'-1\in I_A\setminus I_B$ forces $\alpha(y'-1)=0$, hence $y'=1$ and $y=0$; but then $y+y'=1\le h-1$, a contradiction.
\end{proof}

\begin{proof}[Proof of \cref{thm:ub}]
Let $m(x,y\mid S^{(h)}_n)=m(x',y'\mid S^{(h)}_n)$ with $n\ge 2h+1$.  The values at least $h$ agree, so by \cref{lem:shell} the shell functions agree and \cref{lem:S} gives $x=x'$.  Then the values at most $h-1$ agree as well, i.e.\ $N(x,y)=N(x,y')$, so \cref{lem:V} gives $y=y'$.
\end{proof}

\subsection{Proof of the height-three theorem}

\begin{proof}[Proof of \cref{thm:strip}]
For $n\ge 7$, \cref{prop:lb} gives $\mdim\ge n$ and \cref{thm:ub} with $h=3$ (so $2h+1=7$) gives $\mdim\le n$.  For $n=6$ the lower bound is again \cref{prop:lb}, and the set
\[
 \{(0,0),(1,0),(2,0),(3,0),(3,2),(5,0)\}
\]
is verified to be a multiset resolving set.  The value $\mdim(P_3\boxtimes P_3)=\infty$ follows from the diameter-two obstruction of \cite[Theorem~3.1]{SimanjuntakEtAl}, since $P_3\boxtimes P_3$ has diameter two and is not a path; see also \cite{HakanenYero2024}.  The values $5$ and $6$ for $n=4,5$ are obtained by exhaustive search (ancillary files); note that \cref{prop:lb} is not tight there.
\end{proof}

\section{Strips of arbitrary fixed height}\label{sec:strips}

\begin{proof}[Proof of \cref{thm:strips}]
The upper bound is \cref{thm:ub}.  For the lower bound, consider the pair $(x,0)$, $(x,h-1)$ in $P_h\boxtimes P_n$ and a landmark $(a,b)$ with $|x-a|\ge h-1$.  Since $|0-b|,|h-1-b|\le h-1\le|x-a|$, the landmark contributes $|x-a|$ to both vertices.  Hence if no landmark lies within horizontal distance $h-2$ of the column $x$, the two vertices receive identical multisets.  A single landmark serves at most $2(h-2)+1=2h-3$ columns, so at least $\lceil n/(2h-3)\rceil$ landmarks are required.
\end{proof}

\begin{remark}
The threshold $2h+1$ in \cref{thm:ub} is the point from which every column $x$ satisfies $\max(x,n-1-x)\ge h$, so that at least one extreme column of the strip lies at horizontal distance at least $h$ from $x$ and the far shells of \cref{lem:shell} recover $D(x)$.  It is not an artefact of the proof: for $n<2h+1$ the pattern can fail, e.g.\ $S^{(6)}_{11}$ does not resolve $P_6\boxtimes P_{11}$.  The gap between the constants $1/(2h-3)$ and $1$ remains; the height-$4$ data of \cref{sec:landscape} suggest that the truth is not a clean linear function of $n$ for $h\ge 4$.
\end{remark}

\section{The rectangular landscape}\label{sec:landscape}

Exhaustive computations (ancillary files) determine the smallest size of a multiset resolving set, capped at $6$, for all rectangles $P_{n_1}\boxtimes P_{n_2}$ with $3\le n_1\le 7$ in the ranges below.  Together with \cref{thm:strip} they display the transition from the diagonal band, where the dimension is $4$ or $5$, to the linear regime of \cref{thm:strips}.

\begin{center}
\begin{tabular}{c l}
\toprule
$n_1$ & values of $\mdim(P_{n_1}\boxtimes P_{n_2})$, $n_2=n_1,\dots$\\
\midrule
$3$ & $\infty,\,5,\,6,\,6,\,7,\,8,\,9,\,10,\,11,\,12,\;\dots=n_2$ (\cref{thm:strip})\\
$4$ & $6,\,5,\,5,\,5,\,5,\,5,\,6,\,5,\,6,\,{>}6,\,6,\,{>}6,\,{>}6,\,{>}6,\,{>}6$ ($n_2=4,\dots,18$)\\
$5$ & $4,\,4,\,5,\,5,\,5,\,5,\,5,\,6,\,6,\,6,\,{>}6,\,{>}6,\,{>}6,\,{>}6$ ($n_2=5,\dots,18$)\\
$6$ & $4,\,4,\,4,\,5,\,5,\,5,\,5,\,5,\,5,\,5,\,5$ ($n_2=6,\dots,16$)\\
$7$ & $4,\,4,\,4,\,5,\,5,\,5,\,5,\,5$ ($n_2=7,\dots,14$)\\
\bottomrule
\end{tabular}
\end{center}

The height-$4$ row is not monotone in $n_2$: sizes $5$ and $6$ and values ${>}6$ interleave ($n_2=11$: $5$; $n_2=12$: $6$; $n_2=13$: ${>}6$; $n_2=14$: $6$).  Determining $\mdim(P_4\boxtimes P_n)$ exactly appears to require new ideas and is left open.

\section{Computational certificates and checks for Part III}\label{sec:verification}

All finite computations used in Part~III are implemented in the ancillary files accompanying the manuscript, which contain every script named below together with its recorded output; checksums and the software versions used are recorded in the Zenodo deposit of Part~III (see the data availability statement).  For the reader's convenience, the computer-assisted ingredients of Part~III are exactly these: the transfer certificate $V_5=V_4+1$ of \cref{prop:lb}, the potential inequality of \cref{rem:potential}, the $19$-state recurrent core of \cref{cor:structure,rem:automaton}, the exhaustive small cases ($n\le 12$) in \cref{thm:strip}, and the rectangular landscape of \cref{sec:landscape}; all remaining arguments are mathematical proofs.  Each check is elementary and reproducible:
\begin{itemize}[leftmargin=2em]
\item \texttt{strip\_scan.cpp} --- exhaustive determination of $\mdim(P_3\boxtimes P_n)$ for $n\le 12$ (confirming $=n$ for $6\le n\le 12$) and of the small cases $n=4,5$;
\item \texttt{transfer\_cert.py} --- computation of the vectors $V_4$, $V_5$ and the certificate $V_5=V_4+1$ of \cref{prop:lb};
\item \texttt{analyze\_potential.py} --- check that $V_4$ satisfies the per-transition potential inequality of \cref{rem:potential} on all admissible transitions;
\item \texttt{extremal\_structure.py} --- computation and independent checking of the finite automaton data used in \cref{thm:extremal,cor:structure,rem:automaton}: the number of weight-$n$ admissible sequences equals the number of accepted automaton paths for every $5\le n\le 200$ (exact integer counts); explicit enumeration for $n\le 12$; computation of $\mathcal L$, of the column types in all positions, and of the eigenvector $w$ with $Aw=3w$;
\item \texttt{check\_necessity\_gap.py} --- the necessity gap of \cref{rem:necessity}: among the weight-$n$ admissible sequences, the number of actual multiset resolving sets is $0$ of $188$ ($n=5$), $32$ of $556$ ($n=6$), $320$ of $1660$ ($n=7$), $1328$ of $5004$ ($n=8$), including the explicit non-resolving example;
\item \texttt{pattern\_check.py}, \texttt{lemma\_check.py} --- direct verification that $S_n$ is a multiset resolving set of $P_3\boxtimes P_n$ for $7\le n\le 150$ and of \cref{lem:S,lem:V} at $h=3$ for $n\le 300$;
\item \texttt{general\_lemmas\_check.py} --- verification of every component of the general proof of \cref{thm:ub} for $3\le h\le 12$ and $2h+1\le n\le 80$: the shell recovery of \cref{lem:S}, the cumulative identity \eqref{eq:cum}, the structural facts \ref{it:dom} and \ref{it:gap} with the strict inequality $\beta(t)<\alpha(t)$, the row separation of \cref{lem:V}, and the end-to-end resolving property;
\item \texttt{rect\_scan6.cpp} --- the rectangular landscape of \cref{sec:landscape}.
\end{itemize}

\section*{Data and code availability}

All programs, data, and machine-readable certificates accompany the manuscript as ancillary files and are permanently archived in three Zenodo deposits, one per part: Part~I (the order-$11$ census: scanners, independent verifiers, graph6 codes, raw per-stream outputs, and aggregation scripts) at \href{https://doi.org/10.5281/zenodo.21612126}{doi:10.5281/zenodo.21612126}; Part~II (the square king grid verifiers and the rectangular scan) at \href{https://doi.org/10.5281/zenodo.21576586}{doi:10.5281/zenodo.21576586}; Part~III (the strip scans, transfer and automaton certificates, and pattern checks) at \href{https://doi.org/10.5281/zenodo.21609917}{doi:10.5281/zenodo.21609917}.  The order-$\le 10$ graph databases and \textsf{nauty} are publicly available from B.~McKay's web pages \cite{NautySoftware}.

\section*{Acknowledgements and disclosure}

Generative AI tools accessed through Cursor were used as assistive tools for mathematical drafting, language editing, \LaTeX{} formatting, literature search, checking the citations against their sources, and code generation.  The author independently verified every mathematical statement, citation, and reported computational result.  All computational claims are reproducible from the accompanying code.  The author takes full responsibility for the manuscript.

\appendix

\section{The potential \texorpdfstring{$V_4$}{V4}}\label{app:v4}

The table below lists the $62$ finite entries of $V_4$; the two remaining states $(000,000)$ and $(000,010)$ have $V_4=\infty$ (no admissible length-$4$ prefix ends in them).  Each state is a pair $(u,v)=(t_{i-1},t_i)$ of indicator triples of two consecutive columns.  Together with \cref{rem:potential}, every entry of the lower-bound argument of \cref{prop:lb} can be checked by hand: for each admissible transition $(u,v)\to(v,w)$ one verifies $V_4(u,v)+|w|\ge V_4(v,w)+1$, an inequality valid in $\mathbb Z\cup\{\infty\}$.  Two worked examples: the admissible transition $(011,000)\to(000,101)$ has $V_4(011,000)=4$, $|101|=2$ and $V_4(000,101)=4$, giving $4+2\ge 4+1$ with slack $1$; the transition $(000,100)\to(100,101)$, the entry step of the period-three pattern of \cref{sec:upper}, has $V_4(000,100)=4$, $|101|=2$ and $V_4(100,101)=5$, giving $4+2=5+1$, so it is tight.

\begin{center}
\small
\begin{tabular}{ccc@{\qquad}ccc@{\qquad}ccc@{\qquad}ccc}
\toprule
$u$ & $v$ & $V_4$ & $u$ & $v$ & $V_4$ & $u$ & $v$ & $V_4$ & $u$ & $v$ & $V_4$\\
\midrule
000 & 001 & 4 & 010 & 010 & 4 & 100 & 010 & 4 & 110 & 010 & 5 \\
000 & 011 & 5 & 010 & 011 & 5 & 100 & 011 & 5 & 110 & 011 & 6 \\
000 & 100 & 4 & 010 & 100 & 4 & 100 & 100 & 4 & 110 & 100 & 5 \\
000 & 101 & 4 & 010 & 101 & 5 & 100 & 101 & 5 & 110 & 101 & 6 \\
000 & 110 & 5 & 010 & 110 & 5 & 100 & 110 & 5 & 110 & 110 & 6 \\
000 & 111 & 5 & 010 & 111 & 6 & 100 & 111 & 6 & 110 & 111 & 7 \\
001 & 000 & 3 & 011 & 000 & 4 & 101 & 000 & 4 & 111 & 000 & 5 \\
001 & 001 & 4 & 011 & 001 & 5 & 101 & 001 & 4 & 111 & 001 & 5 \\
001 & 010 & 4 & 011 & 010 & 5 & 101 & 010 & 5 & 111 & 010 & 6 \\
001 & 011 & 5 & 011 & 011 & 6 & 101 & 011 & 5 & 111 & 011 & 6 \\
001 & 100 & 4 & 011 & 100 & 5 & 101 & 100 & 4 & 111 & 100 & 5 \\
001 & 101 & 5 & 011 & 101 & 6 & 101 & 101 & 6 & 111 & 101 & 7 \\
001 & 110 & 5 & 011 & 110 & 6 & 101 & 110 & 5 & 111 & 110 & 6 \\
001 & 111 & 6 & 011 & 111 & 7 & 101 & 111 & 7 & 111 & 111 & 8 \\
  010 & 000 & 3 & 100 & 000 & 3 & 110 & 000 & 4 &  & &  \\
  010 & 001 & 4 & 100 & 001 & 4 & 110 & 001 & 5 &  & &  \\
\bottomrule
\end{tabular}
\end{center}

The recurrent part $\mathcal L$ of the extremal automaton (\cref{sec:extremal}) consists of the following $19$ states; the vector $w$ satisfies $Aw=3w$, where $A$ is the adjacency matrix of the tight transitions inside $\mathcal L$, i.e.\ for every state $s\in\mathcal L$ the values $w$ of the tight successors of $s$ sum to $3\,w(s)$.

\begin{center}
\small
\begin{tabular}{ccc@{\qquad}ccc@{\qquad}ccc@{\qquad}ccc}
\toprule
$u$ & $v$ & $w$ & $u$ & $v$ & $w$ & $u$ & $v$ & $w$ & $u$ & $v$ & $w$\\
\midrule
000 & 001 & 9 & 001 & 001 & 9 & 010 & 010 & 6 & 100 & 101 & 2 \\
000 & 100 & 9 & 001 & 010 & 5 & 010 & 100 & 9 & 101 & 000 & 6 \\
000 & 101 & 6 & 001 & 100 & 9 & 100 & 000 & 2 & 101 & 001 & 9 \\
001 & 000 & 2 & 001 & 101 & 2 & 100 & 001 & 9 & 101 & 100 & 9 \\
010 & 001 & 9 & 100 & 010 & 5 & 100 & 100 & 9 &     &     &   \\
\bottomrule
\end{tabular}
\end{center}


\begin{thebibliography}{9}

\bibitem{Slater1975}
P.~J. Slater,
\emph{Leaves of trees},
Congressus Numerantium \textbf{14} (1975), 549--559.

\bibitem{HararyMelter1976}
F.~Harary and R.~A. Melter,
\emph{On the metric dimension of a graph},
Ars Combinatoria \textbf{2} (1976), 191--195.

\bibitem{SimanjuntakEtAl}
R.~Simanjuntak, P.~Siagian, and T.~Vetr\'ik,
\emph{The multiset dimension of graphs},
arXiv:1711.00225v2, 2019.

\bibitem{HafidhEtAl2019}
Y.~Hafidh, R.~Kurniawan, S.~W. Saputro, R.~Simanjuntak, S.~Tanujaya, and S.~Uttunggadewa,
\emph{Multiset dimensions of trees},
arXiv:1908.05879, 2019.

\bibitem{HakanenYero2024}
A.~Hakanen and I.~G. Yero,
\emph{Complexity and equivalency of multiset dimension and ID-colorings},
Fundamenta Informaticae \textbf{191} (2024), 315--330.
\href{https://doi.org/10.3233/FI-242185}{doi:10.3233/FI-242185}.

\bibitem{KhemmaniIsariyapalakul2018}
V.~Khemmani and S.~Isariyapalakul,
\emph{The multiresolving sets of graphs with prescribed multisimilar equivalence classes},
International Journal of Mathematics and Mathematical Sciences \textbf{2018} (2018), Article 8978193.

\bibitem{FarhanEtAl2026}
M.~Farhan, S.~Klav\v{z}ar, D.~Kuziak, and I.~G. Yero,
\emph{Multiset resolvability parameters in graphs: A survey with new results and open problems},
arXiv:2607.10311, 2026.

\bibitem{AlbejaniEtAl2026}
A.~Albejani, Y.~Lin, J.~Ryan, and K.~A. Sugeng,
\emph{A survey on multiset dimension and its variations},
arXiv:2607.08128, 2026.

\bibitem{BloomKennedyQuintas1983}
G.~S. Bloom, J.~W. Kennedy, and L.~V. Quintas,
\emph{Some problems concerning distance and path degree sequences},
in: Graph Theory (\L ag\'ow, 1981), Lecture Notes in Mathematics \textbf{1018}, Springer, Berlin, 1983, pp.~179--190.

\bibitem{McKayPiperno2014}
B.~D. McKay and A.~Piperno,
\emph{Practical graph isomorphism, II},
Journal of Symbolic Computation \textbf{60} (2014), 94--112.

\bibitem{NautySoftware}
B.~D. McKay and A.~Piperno,
\emph{nauty and Traces}, software, version 2.8.9,
\url{https://pallini.di.uniroma1.it/} (accessed July 2026).

\bibitem{OEIS-A001349}
OEIS Foundation Inc.,
\emph{Entry A001349: Number of connected graphs with $n$ nodes},
The On-Line Encyclopedia of Integer Sequences,
\url{https://oeis.org/A001349} (accessed July 2026).

\end{thebibliography}
\end{document}